\newtheorem{thm}{Theorem}[section]
\newtheorem{lemma}[thm]{Lemma}
\newtheorem{prop}[thm]{Proposition}
\newtheorem{clm}[thm]{Claim}
\newcommand\ex{\ensuremath{\mathrm{ex}}}
\newcommand\cN{{\mathcal N}}
\newcommand{\ignore}[1]{}
\title{Generalized Turán problems for double stars}
\author{Dániel Gerbner}
\date{\small Alfr\'ed R\'enyi Institute of Mathematics}
\begin{document}

\maketitle

\begin{abstract}
    We study the generalized Tur\'an function $\ex(n,H,F)$, when $H$ or $F$ is a double star $S_{a,b}$, which is a tree with a central edge $uv$, $a$ leaves connected to $u$ and $b$ leaves connected to $v$. We determine $\ex(n,K_k,S_{a,b})$ and $\ex(n,S_{a,b},F)$ for sufficiently large $n$, where $F$ is either a 3-chromatic graph with an edge whose deletion results in a bipartite graph, or the 2-fan, i.e. two triangles sharing a vertex. We also give bounds on $\ex(n,S_{a,b},S_{c,d})$.
\end{abstract}

\section{Introduction}

Ordinary Tur\'an problems deal with the quantity $\ex(n,F)$, which is the largest number of edges in $n$-vertex graphs that do not contain $F$ as a subgraph. Tur\'an' theorem \cite{T} states that $\ex(n,K_{k+1})$ is equal to the number of edges in the $k$-partite Tur\'an graph, which is the complete $k$-partite graph with each part having order either $\lfloor n/k\rfloor$ or $\lceil n/k\rceil$.

Generalized Tur\'an problems deal with $\ex(n,H,F)$, which is the largest number of copies of $H$ in $n$-vertex graphs that do not contain $F$ as a subgraph. After several sporadic results, the systematic study of generalized Tur\'an problems was initiated by Alon and Shikhelman \cite{ALS2016}.

Gy\H ori, Wang and Woolfson \cite{gyww} initiated the study of generalized Tur\'an problems involving double stars. The double star $S_{a,b}$ consists of a \textit{central edge} $uv$, $a$ leaves connected to $u$ and $b$ leaves connected to $v$. We say that $u$ and $v$ are \textit{centers} of the double star.
Gy\H ori, Wang and Woolfson \cite{gyww} showed the following.

\begin{thm}[Gy\H ori, Wang and Woolfson \cite{gyww}]\label{GyWW} If $n$ is sufficiently large, then $\ex(n,S_{a,b},K_3)=\cN(S_{a,b},K_{m,n-m})$ for some $m$.
\end{thm}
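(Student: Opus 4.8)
The lower bound is immediate: every $K_{m,n-m}$ is triangle-free, so $\ex(n,S_{a,b},K_3)\ge\max_m\cN(S_{a,b},K_{m,n-m})$. For the upper bound the plan is to take an $n$-vertex triangle-free graph $G$ with the most copies of $S_{a,b}$ and show $G$ is complete bipartite. Write $d(v)=\deg_G(v)$. Since $G$ is triangle-free, for any edge $uv$ the sets $N(u)\setminus\{v\}$ and $N(v)\setminus\{u\}$ are disjoint, so the copies of $S_{a,b}$ with central edge $\{u,v\}$ number $g(d(u),d(v))$ for $a\ne b$ (and $\tfrac12 g(d(u),d(v))$ for $a=b$), where $g(x,y)=\binom{x-1}{a}\binom{y-1}{b}+\binom{x-1}{b}\binom{y-1}{a}$; hence $\cN(S_{a,b},G)=c\sum_{uv\in E(G)}g(d(u),d(v))$ with $c\in\{1,\tfrac12\}$. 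Putting $\beta(t):=g(t,n-t)$ (symmetric under $t\leftrightarrow n-t$), we get $\cN(S_{a,b},K_{m,n-m})=c\,m(n-m)\beta(m)$, so it will suffice to prove that every $n$-vertex triangle-free $G$ satisfies $\sum_{uv\in E(G)}g(d(u),d(v))\le M:=\max_m m(n-m)\beta(m)$, with equality only for complete bipartite graphs. I would also use that $N(u)$ and $N(v)$ are disjoint subsets of $V(G)$ for every edge $uv$, hence $d(u)+d(v)\le n$; as $g$ is coordinatewise increasing this gives $g(d(u),d(v))\le\beta(d(u))$, an equality for $K_{m,n-m}$.

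When $g(x,y)$ with $x+y$ fixed is maximized at the balanced point $x=y$ — in particular when $a=b$, where $g(x,y)=2\binom{x-1}{a}\binom{y-1}{a}$ and log-concavity of binomial coefficients applies directly — the argument is short: with $d(u)+d(v)\le n$ every edge has $g(d(u),d(v))\le g(\lfloor n/2\rfloor,\lceil n/2\rceil)=\beta(\lfloor n/2\rfloor)$, so by Mantel's theorem $\sum_{uv}g(d(u),d(v))\le e(G)\,\beta(\lfloor n/2\rfloor)\le\lfloor n^2/4\rfloor\,\beta(\lfloor n/2\rfloor)=M$, and equality forces $e(G)=\lfloor n^2/4\rfloor$ and $\{d(u),d(v)\}=\{\lfloor n/2\rfloor,\lceil n/2\rceil\}$ on every edge, i.e. $G=K_{\lfloor n/2\rfloor,\lceil n/2\rceil}$.

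The real case is when $g(x,y)$ is maximized at an unbalanced pair (for example for $S_{1,5}$): then $M$ strictly exceeds $\tfrac14n^2\beta(\lfloor n/2\rfloor)$ and the bound above is too weak. I would split $V(G)=H\cup L$, $H=\{v:d(v)>n/2\}$. Two adjacent vertices of $H$ have $|N(u)\cap N(v)|\ge d(u)+d(v)-n>0$, hence a common neighbour and a triangle, so $H$ is independent; consequently every $v\in H$ has all neighbours in $L$, so $d(v)\le n-|H|$ and $|H|<n/2$. Now bound separately the edges meeting $H$ and the edges inside $L$. For $uv$ with $u\in H$, $d(v)\le n-d(u)$, so $g(d(u),d(v))\le\beta(d(u))$; summing, the edges meeting $H$ contribute at most $\sum_{u\in H}d(u)\beta(d(u))$, and writing $s_u=n-d(u)\in[\,|H|,n/2)$, using $\beta(d(u))=\beta(s_u)$, this is at most $|H|\max_{|H|\le s<n/2}(n-s)\beta(s)\le M$ — and it is close to $M$ only when $|H|$ is near the optimal $m$ and essentially every vertex of $H$ is joined to essentially all of $L$. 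The edges inside $L$ contribute at most $\tfrac14|L|^2\beta(\lfloor n/2\rfloor)\le M$ (both endpoints have degree $\le n/2$). The point is that these two estimates cannot both be near $M$: if $H$ is almost completely joined to $L$, any two adjacent vertices of $L$ would share a neighbour in $H$ (since $|H|>n/2$ up to lower order), giving a triangle, so $L$ is almost independent and its internal contribution is of lower order; whereas if a positive fraction of the edges between $H$ and $L$ is missing, the first estimate drops below $M$ by a positive proportion that swallows the $L$-internal term. Making this trade-off quantitative, optimizing over $|H|$, and using $n$ large to compare the polynomials in $n$ should give $\sum_{uv}g(d(u),d(v))<M$ unless $G$ is within lower-order corrections of a complete bipartite graph; a final careful analysis of that near-extremal regime should then remove the corrections and identify $G$ with $K_{m,n-m}$ for the $m$ attaining $M$.

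I expect the last step to be the main obstacle. The naive estimates ``every edge contributes at most $\max_t\beta(t)$'' and ``$e(G)\le\tfrac14n^2$'' each lose a constant factor and by themselves do not beat the complete bipartite construction, so one must genuinely use that a near-extremal high-degree part is almost completely joined to the rest and therefore kills essentially all edges among the low-degree vertices. Getting the constants to match the extremal $K_{m,n-m}$ uniformly in $a$ and $b$, and upgrading the approximate structure to the exact conclusion $G\cong K_{m,n-m}$, is where the bulk of the work will lie; the hypothesis that $n$ is sufficiently large is used at every such comparison and to make the error terms negligible.
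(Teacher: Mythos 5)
This statement is quoted from Gy\H ori, Wang and Woolfson \cite{gyww}; the paper does not reprove it, but it does record (just before the proof of Theorem \ref{cce}) the one observation that makes the upper bound work, and that observation is exactly what your proposal is missing. Your setup is correct and matches the intended framework: counting copies by their central edge, noting that triangle-freeness makes $N(u)\setminus\{v\}$ and $N(v)\setminus\{u\}$ disjoint so that the count per edge is $f(d(u),d(v))$, and using $d(u)+d(v)\le n$. Your balanced case also goes through. But you correctly identify the unbalanced case as ``the real case,'' and there your argument is only a plan: you bound the contribution of edges meeting the high-degree set $H$ and of edges inside $L$ each by $M$ separately, observe that this only gives $2M$, and then appeal to a heuristic trade-off (``these two estimates cannot both be near $M$ \dots making this trade-off quantitative \dots should give'') that is never carried out. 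As written, the upper bound is not established, and by your own admission the bulk of the work remains.

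The idea that closes this gap cleanly is to compare $G$ not against the globally optimal $K_{m,n-m}$ but against $K_{\Delta',n-\Delta'}$ chosen relative to the maximum degree $\Delta$ of $G$. If $\Delta\le n/2$, every edge lies in at most $f(\Delta,\Delta)\le f(\lfloor n/2\rfloor,\lceil n/2\rceil)$ copies and Mantel finishes. If $\Delta\ge n/2$, two facts hold simultaneously: (i) since $d(u),d(v)\le\Delta$ and $d(u)+d(v)\le n$, every edge satisfies $f(d(u),d(v))\le\max_{n/2\le\Delta'\le\Delta}f(\Delta',n-\Delta')=f(\Delta^*,n-\Delta^*)$ for some $n/2\le\Delta^*\le\Delta$; and (ii) a triangle-free graph with maximum degree $\Delta$ has at most $\Delta(n-\Delta)$ edges (the neighborhood of a maximum-degree vertex is independent, and every edge meets the remaining $n-\Delta$ vertices, each of degree at most $\Delta$), while $\Delta'(n-\Delta')$ is decreasing for $\Delta'\ge n/2$, so $|E(G)|\le\Delta(n-\Delta)\le\Delta^*(n-\Delta^*)=|E(K_{\Delta^*,n-\Delta^*})|$. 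Hence $\cN(S_{a,b},G)\le|E(G)|\,f(\Delta^*,n-\Delta^*)\le\cN(S_{a,b},K_{\Delta^*,n-\Delta^*})$, with both the edge count and the per-edge count dominated by the \emph{same} complete bipartite graph. This is precisely what rescues the ``each bound separately loses a constant factor'' problem you run into: the comparison graph is tied to $\Delta$ rather than fixed in advance. Without this (or an equivalent completed trade-off argument), your proof is not complete.
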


The above theorem states that among triangle-free graphs, the number of copies of a double star is always maximized by a complete bipartite graph. Given $a$ and $b$, it is straightforward to determine which complete bipartite graph contains the most copies of $S_{a,b}$ (i.e. to determine $m$), but we cannot handle this problem in such generality.

Similar situation, when $\ex(n,H,F)$ is almost determined for some classes of graphs, only some optimization of polynomials is missing, has already occurred in generalized Tur\'an problems. Gerbner and Palmer \cite{gerpal} introduced the $F$-Tur\'an-good graphs, which are graphs $H$ such that $\ex(n,H,F)$ is equal to the number of copies of $H$ in the $(\chi(F)-1)$-partite Tur\'an graph for $n$ sufficiently large. We say that $H$ is \textit{weakly $F$-Tur\'an-good} if for $n$ sufficiently large we have $\ex(n,H,F)=\cN(H,G)$ for some complete $(\chi(F)-1)$-partite $n$-vertex graph. We can reformulate Theorem \ref{GyWW} the following way: $S_{a,b}$ is weakly $K_3$-Tur\'an-good. 

Gy\H ori, Pach and Simonovits \cite{gypl} showed that complete multipartite graphs are weakly $K_k$-Tur\'an-good for any $k$.
Gerbner \cite{ger3} showed that if $F$ is a 3-chromatic graph with a color-critical edge (which is an edge whose deletion decreases the chromatic number), then complete multipartite graphs are weakly $F$-Tur\'an-good.

It is easy to see that counting copies of $S_{a,b}$ or $K_{a+1,b+1}$ in complete bipartite graphs is the same optimization problem. Indeed, every copy of $K_{a+1,b+1}$ contains exactly $(a+1)(b+1)$ copies of $S_{a,b}$, and every copy of $S_{a,b}$ is counted exactly once. In fact, Theorem \ref{GyWW} implies the above mentioned result of Gy\H ori, Pach and Simonovits \cite{gypl} for $k =3$, as $\cN(K_{a+1,b+1},G)\le (a+1)(b+1)\cN(S_{a,b},G)\le (a+1)(b+1)\cN(S_{a,b},K_{m,n-m})=\cN(K_{a+1,b+1},K_{m,n-m})$. 
More generally, we have the following.

\begin{prop}\label{propi}
Assume that $H$ has a unique proper $r$-coloring, $F$ has chromatic number $r+1$ and $H$ is weakly $F$-Tur\'an-good. If $H'$ is obtained by adding edges to $H$ and has chromatic number $r$, 
then $H'$ is also weakly $F$-Tur\'an-good. Moreover, the same complete $r$-partite graph contains the maximum number of copies of $H$ and $H'$.
\end{prop}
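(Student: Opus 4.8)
The plan is to show that over complete $r$-partite host graphs the number of copies of $H$ and of $H'$ differ only by a constant factor depending on the two graphs, while over \emph{arbitrary} host graphs one retains a one-sided inequality in the same direction; feeding this into the assumed weak $F$-Tur\'an-goodness of $H$ then determines the extremal graph for $H'$.

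I would begin with two preliminary observations. First, any proper coloring of $H'$ with at most $r$ colors is also one of $H$, and since the latter is unique, $H'$ has the same unique family of color classes $C_1,\dots,C_r$ as $H$. Second, the hypotheses force $\chi(H)=r$: if $\chi(H)<r$ then uniqueness would make every color class a singleton (otherwise a class could be split into two, yielding a second coloring with at most $r$ colors), so $H$ would be a clique on fewer than $r$ vertices, to which no edge can be added, contradicting $\chi(H')=r$. Hence the unique coloring of $H$ (and of $H'$) uses exactly $r$ colors.

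The crux is the counting identity that, for every complete $r$-partite graph $G$ on $n$ vertices,
\[
\cN(H,G)\cdot|\Aut(H)| \;=\; \cN(H',G)\cdot|\Aut(H')|.
\]
To prove it, write $V_1,\dots,V_r$ for the parts of $G$, of sizes $n_1,\dots,n_r$, and consider an embedding (injective homomorphism) $\phi$ of $H$ into $G$. The map sending $x\in V(H)$ to the index $j$ with $\phi(x)\in V_j$ is a proper coloring of $H$ with at most $r$ colors, hence --- by uniqueness --- it is the coloring with classes $C_1,\dots,C_r$ up to relabelling; in particular it uses all $r$ colors and matches the $C_i$ bijectively to the parts $V_j$. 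Thus every embedding of $H$ sends each $C_i$ into a single part $V_{\sigma(i)}$ for some permutation $\sigma$ of $\{1,\dots,r\}$, and conversely, once $\sigma$ is fixed, \emph{any} choice of an injection of each $C_i$ into $V_{\sigma(i)}$ yields an embedding, since every edge of $H$ joins two distinct classes and hence two distinct parts of the complete multipartite graph $G$. Therefore the number of embeddings of $H$ into $G$ equals $\sum_{\sigma}\prod_{i=1}^{r}(n_{\sigma(i)})_{|C_i|}$, where $(m)_\ell=m(m-1)\cdots(m-\ell+1)$. The identical argument applies to $H'$, whose color classes are the very same $C_1,\dots,C_r$, so $H$ and $H'$ have equally many embeddings into $G$; dividing by the automorphism counts gives the displayed identity.

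Finally I would combine things. For an arbitrary graph $G$, every embedding of $H'$ into $G$ is automatically an embedding of $H$, as it need only satisfy fewer edge constraints; hence $\cN(H',G)\cdot|\Aut(H')|\le \cN(H,G)\cdot|\Aut(H)|$ for \emph{all} $G$. Now fix $n$ large and let $G^{*}$ be a complete $r$-partite $n$-vertex graph witnessing that $H$ is weakly $F$-Tur\'an-good, so $\cN(H,G^{*})=\ex(n,H,F)$; note $G^{*}$ is $F$-free since $\chi(F)=r+1>r$. Then for every $F$-free $n$-vertex graph $G$,
\[
\cN(H',G)\le\frac{|\Aut(H)|}{|\Aut(H')|}\,\cN(H,G)\le\frac{|\Aut(H)|}{|\Aut(H')|}\,\cN(H,G^{*})=\cN(H',G^{*}),
\]
the last step by the counting identity. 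Hence $\ex(n,H',F)=\cN(H',G^{*})$, so $H'$ is weakly $F$-Tur\'an-good and $G^{*}$ is simultaneously extremal for $H$ and $H'$; the ``moreover'' follows because on complete $r$-partite graphs $\cN(H',\cdot)$ is a fixed positive multiple of $\cN(H,\cdot)$, so a common maximizer exists. The one genuinely delicate point is the heart of the counting argument --- that uniqueness of the $r$-coloring forces each color class of $H$ into a single part of $G$ --- after which it is the class sizes, not the internal edges of $H$, that govern the count in complete multipartite hosts, and everything else is bookkeeping.
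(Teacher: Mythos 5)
Your proof is correct and follows essentially the same route as the paper's: both establish that $\cN(H',G)\le\lambda\,\cN(H,G)$ for every host $G$, with equality when $G$ is complete $r$-partite, and then feed this into the extremality of $H$; your constant $\lambda=|\Aut(H)|/|\Aut(H')|$ coincides with the paper's $p/q$, where $p$ is the number of ways to add edges to $H$ to obtain $H'$ and $q$ is the number of copies of $H$ in $H'$. The paper phrases the equality step as the observation that all $p$ edge-additions to a copy of $H$ in a complete $r$-partite graph stay inside that graph, which is exactly your point that the unique coloring forces each color class into a single part.
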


\begin{proof}
Let us assume that there are $p$ ways to obtain $H'$ from $H$ by adding edges, and $H'$ contains $q$ copies of $H$. Observe that if a copy of $H$ is in a complete $r$-partite graph $G_0$, then all the $p$ ways to obtain $H'$ create a subgraph of $G_0$. Let $n$ be large enough, and let $G_0$ be a complete $r$-partite $n$-vertex graph with $\cN(H,G_0)=\ex(n,H,F)$. Then for any $F$-free $n$-vertex graph $G$ we have $\cN(H',G)\le p\cN(H,G)/q\le p\ex(n,H,F)/q=p\cN(H,G_0)/q=\cN(H',G_0)$, completing the proof.
\end{proof}

In this paper we examine further generalized Tur\'an problems where the forbidden graph or the graph we count or both are double stars. 
Our first result extends Theorem \ref{GyWW} to 3-chromatic graphs with a color-critical edge in place of the triangle.

\begin{thm}\label{cce}
If $F$ has chromatic number three and a color-critical edge, then $S_{a,b}$ is weakly $F$-Tur\'an-good.
\end{thm}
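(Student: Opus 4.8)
The plan is to take an $n$-vertex $F$-free graph $G$ with $\cN(S_{a,b},G)=\ex(n,S_{a,b},F)$ and to show that, for $n$ large, $G$ may be assumed to be complete bipartite. Since $\chi(F)=3$, every complete bipartite graph is $F$-free, so this immediately yields $\ex(n,S_{a,b},F)=\max_m\cN(S_{a,b},K_{m,n-m})$, which is exactly the assertion that $S_{a,b}$ is weakly $F$-Tur\'an-good. I would argue by induction on $n$, assuming $\ex(n',S_{a,b},F)=\cN(S_{a,b},K_{m_{n'},n'-m_{n'}})$ for all $n'<n$ with $m_{n'}$ chosen optimally; in particular $m_{n'}=\Theta(n')$ and the discrete derivative $\ex(n',S_{a,b},F)-\ex(n'-1,S_{a,b},F)$ is $\Omega((n')^{a+b+1})$, which follows by adding a vertex to one side of the optimal complete bipartite graph (routine bookkeeping).

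\textbf{Step 1: linear minimum degree.} By the inductive remark, deleting any vertex $x$ from the extremal $G$ destroys at least $\ex(n,S_{a,b},F)-\ex(n-1,S_{a,b},F)=\Omega(n^{a+b+1})$ copies of $S_{a,b}$. On the other hand, every copy of $S_{a,b}$ through $x$ has $x$ as a leaf (at most $O(d(x)n^{a+b})$ of these, since the unique center adjacent to $x$ is one of $d(x)$ choices) or as a center (at most $O(d(x)^{a+1}n^{b}+d(x)^{b+1}n^{a})=O(d(x)n^{a+b})$, using $d(x)\le n$). Hence $d(x)=\Omega(n)$ for every $x$, so in particular $e(G)=\Omega(n^2)$.

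\textbf{Step 2: stability.} From $\cN(S_{a,b},G)\ge\cN(S_{a,b},K_{\lfloor n/2\rfloor,\lceil n/2\rceil})=\Theta(n^{a+b+2})$ and the elementary bound $\cN(S_{a,b},G)\le\sum_{uv\in E(G)}\bigl[\binom{d(u)-1}{a}\binom{d(v)-1}{b}+\binom{d(v)-1}{a}\binom{d(u)-1}{b}\bigr]$, one needs to deduce that $e(G)\ge(1/4-o(1))n^2$, after which the Erd\H{o}s--Simonovits stability theorem for $F$ produces a partition $V(G)=A\cup B$ with $|A|,|B|=n/2\pm o(n)$ and only $o(n^2)$ edges inside the classes. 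This deduction is the main obstacle: the crude convexity estimate $\cN(S_{a,b},G)\le\frac{1}{a!b!}\sum_v d(v)^{a+b+1}$ together with $\sum_v d(v)=2e(G)$ and $d(v)\le n$ is too lossy to force $e(G)$ close to $n^2/4$, so one must use $F$-freeness itself — for instance that the set of vertices of degree exceeding $(\tfrac12+\varepsilon)n$ spans no edge (otherwise the color-critical edge of $F$ can be realized inside the book formed by such an edge and its many common neighbours, extended through the rest of the graph), and, more quantitatively, that the degree sequence cannot be far from that of a near-balanced complete bipartite graph without creating a copy of $F$.

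\textbf{Step 3: from approximate to exact.} Given the partition $A\cup B$ of Step 2, a standard local-switching argument (relocating the neighbourhood of a low-degree vertex to that of a dense vertex in the same class, with the usual verification that $F$-freeness is preserved — using that a large complete multipartite $F$-free graph has at most two parts, as $\chi(F)=3$) boosts the number of neighbours in $B$ of every vertex of $A$, and symmetrically, to $(1/2-o(1))n$. Now suppose some edge $uv$ lies inside $A$: writing $xy$ for a color-critical edge of $F$ so that $F-xy$ is bipartite with parts $P\ni x,y$ and $Q$, send $x\mapsto u$, $y\mapsto v$, and greedily embed the rest of $P$ into $A$ and $Q$ into $B$ — possible since only $o(n^2)$ pairs between $A$ and $B$ are non-edges and $u,v$ have $\Omega(n)$ common neighbours in $B$ — and then add the edge $uv=xy$, producing a copy of $F$ in $G$, a contradiction. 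Hence $A$ and $B$ are independent; since adding any missing edge between $A$ and $B$ keeps $G$ bipartite, hence $F$-free, and does not decrease $\cN(S_{a,b},G)$, maximality forces $G=K_{|A|,|B|}$. Thus the extremal graph is complete bipartite, so $\ex(n,S_{a,b},F)=\max_m\cN(S_{a,b},K_{m,n-m})$ for $n$ large, i.e.\ $S_{a,b}$ is weakly $F$-Tur\'an-good. The crux is Step 2 — converting the lower bound on the number of copies of $S_{a,b}$ into the near-extremal edge bound $e(G)\ge(1/4-o(1))n^2$, which cannot be done by convexity alone and genuinely requires exploiting $F$-freeness and the color-critical edge; the remainder is classical stability plus a by-now-standard cleaning procedure.
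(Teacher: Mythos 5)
There is a genuine gap, and it sits exactly where you flag it: Step 2 is not merely "the main obstacle," it is based on a false premise. You want to deduce $e(G)\ge (1/4-o(1))n^2$ so that Erd\H{o}s--Simonovits stability applies, but the extremal graph for this problem is in general \emph{not} close to the balanced complete bipartite graph and does \emph{not} have close to $n^2/4$ edges. The number of copies of $S_{a,b}$ in $K_{m,n-m}$ is, to leading order, proportional to $m^{a+1}(n-m)^{b+1}+m^{b+1}(n-m)^{a+1}$, and for $a$ and $b$ far apart this is maximized at a strongly unbalanced $m$ (e.g.\ for $a=1$, $b=10$ the optimum is near $m\approx 2n/13$, giving about $0.13\,n^2$ edges versus $0.25\,n^2$). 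So the optimizer itself is an $F$-free graph with $\Theta(n^{a+b+2})$ copies and far fewer than $(1/4-o(1))n^2$ edges; no argument can force the extremal $G$ to have near-$n^2/4$ edges, and edge-stability relative to the Tur\'an graph is simply the wrong tool. This also undermines Step 3, which presumes a near-balanced bipartition $|A|,|B|=n/2\pm o(n)$. (This is precisely why the paper's statement is "weakly Tur\'an-good," i.e.\ optimal among \emph{all} complete bipartite graphs, not the Tur\'an graph.)

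The paper's proof avoids stability entirely. It first pins the maximum degree into a window $n/2<\Delta<\beta n$ (handling $\Delta$ near $n$ separately), and then the engine is a pair of lemmas showing that if the largest book in $G$ is $B_t$, then $|E(G)|\le \Delta(n-\Delta)-ctn$; combined with the per-edge count $f(d(u),d(v))$, the inequality $d(u)+d(v)\le n+g(uv)$ (where $g(uv)$ is the number of common neighbours), and the Alon--Shikhelman bound $\sum_{uv}g(uv)\le 3\cN(K_3,G)=O(\varepsilon n^2)$ for graphs with no linear-sized book, one beats $\cN(S_{a,b},K_{m,n-m})$ for the \emph{optimal} $m\in[n/2,\Delta]$, whatever it is. If you want to repair your argument you would need to replace Step 2 by a stability statement relative to an unbalanced $K_{m,n-m}$ with $m$ determined by the degree sequence, which is essentially what the paper's Lemmas \ref{lem1} and \ref{lem2} accomplish; your Steps 1 and 3 are fine in spirit but cannot stand on the current Step 2.
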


We remark that the asymptotics has been known, as $\ex(n,S_{a,b},F)=(1+o(1))\ex(n,S_{a,b},K_3)$ for any 3-chromatic $F$ by a theorem of Gerbner and Palmer \cite{GP}.
Also note that using Proposition \ref{propi}, Theorem \ref{cce} implies the above mentioned result of Gerbner \cite{ger3}, i.e. that complete bipartite graphs are weakly $F$-Tur\'an-good.

The smallest example of a 3-chromatic graph with no color-critical edge is the 2-fan $F_2$, which consists of two triangles sharing a vertex. It is easy to see that to a complete bipartite graph with at least two vertices in both classes, we can add exactly one edge without creating a copy of $F_2$. Erd\H os, F\"uredi, Gould and Gunderson \cite{efgg} showed that an $F_2$-free graph has at most $\lfloor n^2/4\rfloor+1$ edges, one more than the Tur\'an graph. Gerbner and Palmer \cite{GP} showed that $C_4$ is $F_2$-Tur\'an-good (the single additional edge cannot be in any copy of $C_4$). Gerbner \cite{ger2} extended this to any $K_{a,a}$ with $a>2$ in place of $C_4$. Let $K_{a,b}^+$ denote the graph we obtain from $K_{a,b}$ by adding an edge to the part of size $a$. Gerbner and Patk\'os \cite{gerpat} showed that $\ex(n,K_{a,b},F_2)=\cN(H,K_{m,n-m}^+)$ for some $m$.

\begin{thm}\label{fketto}
For any positive integers $a,b$ and sufficiently large $n$ we have $\ex(n,S_{a,b},F_2)=\cN(S_{a,b},K_{m,n-m}^+)$ for some $m$.
\end{thm}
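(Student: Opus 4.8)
The plan is to combine Theorem~\ref{cce} with the stability-plus-cleaning strategy that was used for $K_{a,b}$ in \cite{gerpat}, adapted to double stars. Fix $a\le b$. By the theorem of Gerbner and Palmer \cite{GP}, if $G$ is an $n$-vertex $F_2$-free graph then $\cN(S_{a,b},G)=(1+o(1))\ex(n,S_{a,b},K_3)$, and by Theorem~\ref{GyWW} this common asymptotic value is achieved (up to lower order) by a complete bipartite graph $K_{m_0,n-m_0}$ with $m_0$ essentially a fixed fraction of $n$ depending only on $a,b$. So the first step is a \emph{stability statement}: any $F_2$-free $G$ with $\cN(S_{a,b},G)$ within $o(n^{a+b})$ of the extremum must be obtainable from a complete bipartite graph $K_{m,n-m}$, with $m=\Theta(n)$ in the right window, by adding and deleting $o(n^2)$ edges. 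This should follow from the corresponding edge-count stability for $F_2$ (the Erd\H os--F\"uredi--Gould--Gunderson result \cite{efgg} together with standard Erd\H os--Simonovits stability for triangle-free graphs, since an $F_2$-free graph minus one well-chosen edge is triangle-free around each vertex), plus a convexity argument showing that to have many copies of $S_{a,b}$ the degree sequence cannot be far from that of the optimal complete bipartite graph.

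The second step is the \emph{local cleaning / symmetrization} step, which is where the real work lies. Starting from a near-optimal $F_2$-free $G$, one shows that $G$ must in fact be \emph{exactly} of the form $K_{m,n-m}^+$ (one part $A$ of size $m$, one part $B$ of size $n-m$, all edges between them, plus at most one edge inside a part). This is done by a vertex-by-vertex exchange argument: let $(A,B)$ be the bipartition from stability, let $G[A]$, $G[B]$ be the graphs induced inside the parts, and let $R$ be the set of missing cross-edges. First argue that $G[A]\cup G[B]$ has at most one edge, for otherwise two independent edges inside the parts, or two edges sharing a vertex, either create an $F_2$ (using that the complement of $R$ is dense, so a common cross-neighbour exists) or can be deleted while strictly increasing the $S_{a,b}$-count. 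Symmetrically, show $R=\emptyset$: a missing cross-edge $xy$ can be added back; the only obstruction is an $F_2$ through $xy$, which forces two specific triangles, and one checks that in an otherwise-complete-bipartite host such a configuration cannot survive together with near-optimality. Finally, having reduced to a graph which is complete bipartite plus at most one edge $e$ inside a part, one optimizes over $m$: the number of copies of $S_{a,b}$ in $K_{m,n-m}^+$ is a polynomial in $m$ (for fixed $n$), and one shows it is maximized at the same $m$-window as the complete bipartite optimizer, with the extra edge $e$ always contributing positively, so the single edge is always worth including. This yields $\ex(n,S_{a,b},F_2)=\cN(S_{a,b},K_{m,n-m}^+)$ for the optimal $m$.

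A cleaner alternative for the upper bound, which I would try first, is to invoke Theorem~\ref{cce} in the following form. Any $F_2$-free graph $G$ is "almost" $K_3$-free: there is at most one edge $e$ such that $G-e$ may still have triangles, but more usefully, for \emph{every} edge $f$ of $G$ the graph $G$ contains no two triangles through a common vertex of $f$ other than via... (this needs care). The honest version: write $G$ as $G'$ plus a single edge $e$ where $G'$ is triangle-free is \emph{false} in general, so one cannot simply reduce to Theorem~\ref{cce}. Hence I expect the genuine argument to be the stability-and-exchange route above. The \textbf{main obstacle} is the exchange step controlling the missing cross-edges $R$ and the internal edges simultaneously: one must rule out trade-offs where adding a cross-edge is blocked by an $F_2$ but deleting an internal edge is "paid for" elsewhere, i.e. one needs a robust inequality showing that each deviation from $K_{m,n-m}^+$ costs $\Omega(n^{a+b-1})$ copies of $S_{a,b}$ while only $O(n^{a+b-2})$ can ever be gained back. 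Establishing that gap — essentially a second-order/convexity estimate on the count $\cN(S_{a,b},\cdot)$ as a function of the degree sequence and the bipartite structure — is the technical heart of the proof, and the double-star counting (which depends on products of degrees at the two endpoints of the central edge) makes it more delicate than the $K_{a,b}$ case handled in \cite{gerpat}.
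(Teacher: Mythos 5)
Your overall template (stability to a bipartition, then local corrections, then optimization over $m$) matches the paper's proof in spirit, but the two steps you defer are exactly where the content lies, and the devices you propose for them would not work as stated. First, the stability step: the optimal complete bipartite host $K_{m,n-m}$ for counting $S_{a,b}$ is in general \emph{unbalanced} ($m$ bounded away from $n/2$ when $a\neq b$), so it has far fewer than $n^2/4-o(n^2)$ edges and the standard Erd\H os--Simonovits stability theorem you invoke says nothing about it. The paper instead proves a bespoke stability lemma (Lemma \ref{deltaersim}) relative to the maximum degree: an $F_2$-free graph with maximum degree $\Delta>n/2$ and at least $\Delta(n-\Delta)-\delta n^2$ edges can be made bipartite by deleting $\varepsilon n^2$ edges. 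Establishing this requires a vertex-cover lemma (Lemma \ref{vc}) and the Abbott--Hanson--Sauer theorem on large stars or matchings, none of which appear in your outline.

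Second, the ``robust gap inequality'' that you correctly identify as the technical heart and explicitly leave open is resolved in the paper by a mechanism you do not mention: write $\cN(S_{a,b},G)=\sum_{uv\in E(G)}f(d(u),d(v))$, observe that $d(u)+d(v)\le n+g(uv)$ where $g(uv)$ is the codegree of $u$ and $v$, so that $f(d(u),d(v))\le f(m,n-m)+g(uv)n^{a+b-1}$ for a suitable $m$, and then bound $\sum_{uv\in E(G)}g(uv)\le 3\cN(K_3,G)=O(n)$ using $\ex(n,K_3,F_2)=O(n)$. Hence the total possible ``gain'' over the per-edge optimum is only $O(n^{a+b})$, while the Claim in the paper shows that unless $G$ is already complete bipartite plus at most one edge inside each part, $|E(G)|\le(\Delta-1)(n-\Delta+1)-\Omega(n)$, which costs $\Omega(n^{a+b+1})$ copies. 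This global edge-count-versus-triangle-count comparison replaces the per-deviation exchange analysis you were hoping to carry out, and without it (or an equivalent) your argument does not close. The boundary cases the paper must still treat separately --- exactly one edge inside each part (an explicit comparison with the corresponding $K_{m,n-m}^+$), vertices of very small degree, and maximum degree $(1-o(1))n$ --- are also absent from your sketch.
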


Perhaps the most natural questions concerning generalized Tur\'an problems for double stars are counting cliques or counting double stars when a double star is forbidden. We resolve both questions by the following results.

\begin{prop}\label{klikks}
Let $n=p(a+b+1)+q$ with $q\le a+b$. Then for $k\ge 3$ we have $\ex(n,K_k,S_{a,b})=p\binom{a+b+1}{k}+\binom{q}{k}$.
\end{prop}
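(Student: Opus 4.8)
The plan is to set $s:=a+b+1$, assuming without loss of generality $a\le b$, and to treat the two inequalities separately. The lower bound is the construction: $p$ vertex‑disjoint copies of $K_s$ together with one copy of $K_q$ is $S_{a,b}$‑free, because a copy of $S_{a,b}$ has $a+b+2=s+1$ vertices and hence cannot lie inside a component of order $\le s$, and this graph contains exactly $p\binom sk+\binom qk$ copies of $K_k$. So all the work is in the upper bound.

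The engine is a local statement: \emph{if $uv\in E(G)$ with $\deg u\ge a+1$ and $\deg v\ge b+1$, then $|N(u)\cup N(v)|\le s$.} Indeed, if $|N(u)\cup N(v)|\ge s+1$ then $|(N(u)\cup N(v))\setminus\{u,v\}|\ge a+b$, and since $|N(u)\setminus\{v\}|\ge a$ and $|N(v)\setminus\{u\}|\ge b$, a routine Hall‑type greedy argument produces disjoint $A\subseteq N(u)\setminus\{v\}$, $B\subseteq N(v)\setminus\{u\}$ with $|A|=a$, $|B|=b$, i.e.\ a copy of $S_{a,b}$ centered at $uv$. From this I extract the structure I need. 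First, \emph{every $K_s$ in $G$ is a connected component isomorphic to $K_s$}: each of its vertices has degree $\ge s-1\ge b+1$, so for any two of them the union of their neighborhoods has size $\le s$ but contains the $s$‑clique, forcing every neighbor of every clique vertex to lie inside the clique. Second, \emph{$G$ is $(s-1)$‑degenerate, and if $G$ has no component isomorphic to $K_s$ then $G$ is $(s-2)$‑degenerate}: if some $H\subseteq G$ had $\delta(H)\ge s-1$ then every vertex of $H$ is big with a big neighbour, so the local statement forces each vertex of $H$ to have $G$‑degree exactly $s-1$ with all its neighbours in $H$, whence $H$ is a union of $(s-1)$‑regular components of $G$; and applying the local statement once more to an $(s-1)$‑regular connected $S_{a,b}$‑free graph shows that adjacent vertices there have equal closed neighbourhoods, so such a component must be $K_s$.

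Now put $f(m):=\lfloor m/s\rfloor\binom sk+\binom{m\bmod s}k$. By convexity of $x\mapsto\binom xk$ this is the maximum of $\sum_i\binom{m_i}k$ over $\sum_i m_i=m$ with each $m_i\le s$, so in particular $f$ is superadditive: $f(m_1)+f(m_2)\le f(m_1+m_2)$. I prove $\cN(K_k,G)\le f(n)$ by induction on $n$. If $G$ is disconnected, apply the hypothesis to each component and add, using superadditivity. If $G\cong K_s$, equality holds. Otherwise $G$ is connected with no $K_s$‑component, hence $(s-2)$‑degenerate; choose $v$ with $\deg v=\delta(G)\le s-2$, so $\cN(K_k,G)\le\binom{\deg v}{k-1}+\cN(K_k,G-v)\le\binom{\deg v}{k-1}+f(n-1)$. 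Since $f(n)-f(n-1)=\binom{(n-1)\bmod s}{k-1}$, this closes the induction whenever $\deg v\le (n-1)\bmod s$, which holds automatically for $n\le s$ and, for $n>s$, whenever $n\equiv 0$ or $-1\pmod s$.

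The main obstacle is the remaining congruence classes: for $n>s$ with $n\bmod s\in\{1,\dots,s-2\}$ the deficit $f(n)-f(n-1)$ is too small to absorb a deleted vertex of degree up to $s-2$ — and for $n\equiv1\pmod s$ a connected graph has no vertex of small enough degree at all — so the single‑vertex reduction must be replaced by peeling a well‑chosen cluster. I expect to do this via the local statement, which shows that a maximal clique $C$ of order $c\le s-1$ has each of its vertices with at most $s-c$ neighbours outside $C$; this bounds the number of copies of $K_k$ meeting $C$ by a Vandermonde‑type sum and lets one remove a block whose vertex count lands in a favourable residue class, so that $f(n)-f(n-|S|)$ can absorb the copies removed. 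Controlling the crossing copies of $K_k$ and checking that suitable blocks exist for every congruence of $n$ is the technical heart of the argument; the hypothesis $k\ge 3$ is used both to discard $[k=1]$‑type boundary terms and because for $k=2$ the bound $f(n)$ simply fails for $S_{a,b}$‑free graphs.
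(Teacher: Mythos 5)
Your lower bound and your structural observations are correct: the local statement (adjacent $u,v$ with $\deg u\ge a+1$, $\deg v\ge b+1$ forces $|N(u)\cup N(v)|\le s$), the fact that every $K_s$ is a full component, and the resulting $(s-2)$-degeneracy of connected non-$K_s$ components are all sound. The problem is that the proof stops exactly where the difficulty begins. Your one-vertex induction closes only when $\deg v\le (n-1)\bmod s$, i.e.\ essentially only for $n\equiv 0,-1\pmod s$, and for everything else you offer a plan ("I expect to do this via\dots", "the technical heart of the argument") rather than an argument. Worse, the fallback you sketch is not justified by your own lemma: for a maximal clique $C$ of order $c$, the bound of $s-c$ outside neighbours per vertex requires the local statement, whose hypotheses ($\deg\ge a+1$ and $\deg\ge b+1$ at both ends) need not hold when $c\le b+1$ or when some clique vertex has small degree; and even where the bound holds, no accounting of the crossing copies of $K_k$ against the deficit $f(n)-f(n-c)$ is carried out. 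So the upper bound is not proved.

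It is worth being clear about why this gap is serious and not just a routine omission. The residual case of your induction --- bounding $\cN(K_k,G)$ by $f(n)$ for graphs in which all relevant degrees are at most $s-1=a+b$ --- is precisely the statement $\ex(n,K_k,S_{a+b+1})=p\binom{a+b+1}{k}+\binom{q}{k}$, i.e.\ the clique case of the Gan--Loh--Sudakov conjecture, proved by Chase; this is a genuinely hard theorem, not something one peels off with a local cluster argument. The paper's proof goes the other way around: it takes an extremal $S_{a,b}$-free graph $G$, observes that any vertex of degree $d>a+b$ forces all $d\ge a+b+1$ of its neighbours to have degree at most $a$ (else one finds $S_{a,b}$ centred at that edge), deletes those $a+b+1$ low-degree vertices (losing at most $(a+b+1)\binom{a-1}{k-1}<\binom{a+b+1}{k}$ copies of $K_k$) and glues in a disjoint $K_{a+b+1}$, contradicting extremality. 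Hence the extremal graph has maximum degree at most $a+b$, and Chase's theorem finishes the proof. If you want to salvage your approach, either cite Chase at the point where your induction stalls, or reproduce a delete-and-replace swap of this kind; without one of these, the upper bound remains unproven.
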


Concerning $\ex(n,S_{a,b},S_{c,d})$, we assume without loss of generality that $a\le b$ and $c\le d$, and separate two cases. If $c\le a$, then we can assume $b<d$, as otherwise $\ex(n,S_{a,b},S_{c,d})=0$.

Let $r=\max\{(c+d)\binom{c+d-1}{a}\binom{c+d-1-a}{b},d\binom{d-1}{a}\binom{d-1}{b}\}$ if $a\neq b$, and \\ $r=\max\{\frac{c+d}{2}\binom{c+d-1}{a}\binom{c+d-1-a}{b},\frac{d}{2}\binom{d-1}{a}\binom{d-1}{b}\}$ if $a=b$. We say that $r$ is \textit{nice} if $r=d\binom{d-1}{a}\binom{d-1}{b}$ for $a\neq b$ or $r=\frac{d}{2}\binom{d-1}{a}\binom{d-1}{b}$ for $a=b$.

\begin{prop}\label{neww}
Let $c\le a\le b<d$. Then there is a constant $k=k(a,b,c,d)$ such that for any $n$, we have $rn-k\le \ex(n,S_{a,b},S_{c,d})\le rn$. Moreover, if $r$ is nice, $dn$ is even and $n$ is large enough, then $\ex(n,S_{a,b},S_{c,d})= rn$. If $r$ is not nice and $c+d+1$ divides $n$, then $\ex(n,S_{a,b},S_{c,d})= rn$.
\end{prop}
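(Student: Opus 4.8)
The plan is to determine $\ex(n,S_{a,b},S_{c,d})$ by understanding, locally, how many copies of $S_{a,b}$ can be "charged" to a vertex or an edge when $S_{c,d}$ is forbidden. First I would fix notation: in an $S_{c,d}$-free graph $G$ with $c\le a\le b<d$, the key constraint is that no edge $xy$ can have $c$ neighbors of $x$ and $d$ neighbors of $y$ outside $\{x,y\}$ available simultaneously; equivalently, for every edge $xy$, $\min(d(x),d(y))\le c+d$ or one of the endpoints has degree below $d$ (one must be careful about shared neighbors, but the relevant inequality is $d(x)+d(y)\le c+d+1$ is \emph{not} forced — rather, one endpoint has small degree). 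The right statement to extract is: every edge $xy$ of $G$ satisfies $d(x)\le c+d$ or $d(y)\le c+d$, and moreover if $d(y)\ge d$ then $d(x)\le c+d-1+|N(x)\cap N(y)|$ type bounds. I would then count copies of $S_{a,b}$ by their central edge $xy$: the number of such copies with centers $x,y$ (in a fixed order) is $\binom{d(x)-1}{a}\binom{d(y)-1}{b}$ minus correction terms for shared neighbors, and this is maximized when the degrees are as large as the $S_{c,d}$-free condition allows.

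The upper bound $\ex(n,S_{a,b},S_{c,d})\le rn$ I would prove by a discharging/averaging argument over edges or vertices. Assign to each edge $xy$ the number of copies of $S_{a,b}$ with that central edge; each copy is counted once (with an ordered pair of centers if $a\ne b$, twice if $a=b$, explaining the factor $\frac12$). Since one endpoint of each edge has degree at most $c+d$ (else an $S_{c,d}$ appears, using $c\le a\le b$ so that the $S_{a,b}$-neighbors don't obstruct), the contribution of edge $xy$ is at most $\max\{(c+d)\binom{c+d-1}{a}\binom{c+d-1-a}{b},\,d(z)\binom{d(z)-1}{a}\binom{d(z)-1}{b}\}$-type quantities where $z$ is the low-degree endpoint; optimizing over the degree of $z$ (which may be large, but then the other endpoint is capped) gives the per-vertex bound $r$. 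Summing over all $n$ vertices yields $\ex(n,S_{a,b},S_{c,d})\le rn$. The lower bound $rn-k$ comes from an explicit construction: either a disjoint union of $\lfloor n/(c+d+1)\rfloor$ cliques $K_{c+d+1}$ (contributing the $(c+d)\binom{c+d-1}{a}\binom{c+d-1-a}{b}$ term per vertex) plus a leftover clique, or a $d$-regular-ish $S_{c,d}$-free graph of large girth (contributing the nice term), whichever of the two maximands defining $r$ is larger; the additive constant $k$ absorbs the boundary vertices where divisibility fails.

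For the exactness statements, when $r$ is not nice the optimal construction is the clique construction, which is exactly $S_{c,d}$-free (a copy of $S_{c,d}$ needs $c+d+2$ vertices, more than $c+d+1$), and when $(c+d+1)\mid n$ it uses all vertices with no leftover, giving precisely $rn$ copies; matching the upper bound closes the gap. When $r$ is nice, the extremal graph should be a $d$-regular graph of girth greater than the diameter of $S_{a,b}$, say girth at least $5$, which is $S_{c,d}$-free because any central edge $xy$ has $d-1<d$ available neighbors on one side — wait, more carefully, $d$-regularity gives $d-1$ neighbors of each center available after removing the central edge, and $d-1<d$, so no $S_{c,d}$; such graphs exist when $dn$ is even and $n$ is large (e.g. random regular graphs, or explicit cages/constructions), and each vertex is the high-degree center, contributing $\frac{d}{2}\binom{d-1}{a}\binom{d-1}{b}$ (or without the $\frac12$ when $a\ne b$) with no loss from short cycles. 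The main obstacle I anticipate is the upper bound's shared-neighbor bookkeeping: when degrees are large one must rule out that two adjacent high-degree vertices (each of degree, say, exactly $c+d$) could somehow beat the per-vertex bound $r$ through overlapping neighborhoods, and one must verify that the two-term maximum in the definition of $r$ genuinely captures the optimum over \emph{all} degree sequences consistent with $S_{c,d}$-freeness — this requires a convexity argument showing the function $t\mapsto t\binom{t-1}{a}\binom{t-1}{b}$ for the unconstrained endpoint is handled correctly together with the constraint that its neighbor is then capped at $c+d$.
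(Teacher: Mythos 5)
Your constructions and the general shape of the argument (count copies of $S_{a,b}$ by their central edge, cap degrees using $S_{c,d}$-freeness, and take the better of disjoint copies of $K_{c+d+1}$ and a $d$-regular triangle-free graph) coincide with the paper's. The lower-bound half is essentially complete: disjoint cliques $K_{c+d+1}$ are $S_{c,d}$-free and give the first term of $r$ exactly when $c+d+1$ divides $n$, and a $d$-regular triangle-free graph (girth $4$ already suffices; you do not need girth $5$) gives the second term exactly when $dn$ is even and $n$ is large.

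The gap is in the upper bound, and you name it yourself without closing it: you never verify that a graph mixing vertices of degree at most $d$ with vertices of larger degree cannot beat $rn$. The paper's resolution has two ingredients you are missing. First, a vertex $u$ of degree greater than $c+d$ forces all of its neighbors to have degree at most $c\le a$, so (since every vertex of $S_{a,b}$ is a center or adjacent to one) $u$ lies in \emph{no} copy of $S_{a,b}$ and can simply be deleted; after this reduction every vertex has degree at most $c+d$, which is cleaner than your ``one endpoint of each edge has degree at most $c+d$'' (and your opening structural claim, that $\min(d(x),d(y))\le c+d$ ``or one of the endpoints has degree below $d$,'' is not a correct or even coherent dichotomy). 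Second, classify edges by whether their endpoints have degree at most $d$ or in the range $(d,c+d]$: the per-edge copy counts $s_1$ (some endpoint of degree above $d$, using $|N(u)\cup N(v)|\le c+d+1$), $s_2$ (mixed) and $s_3$ (both at most $d$) satisfy $s_2<s_1$ and $s_2<s_3$, so writing $m$ for the number of low-degree vertices and $x$ for the number of mixed edges, the total count is bounded by an expression that is nonincreasing in $x$ and \emph{linear} in $m$; its maximum over $0\le m\le n$ is attained at $m=0$ or $m=n$, which yields exactly the two terms in the definition of $r$. This linearity step, not a convexity argument about $t\mapsto t\binom{t-1}{a}\binom{t-1}{b}$, is what rules out intermediate degree sequences, and without it your ``summing the per-vertex bound $r$ over all $n$ vertices'' is an assertion rather than a proof.
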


\begin{thm}\label{cnc}
Let $a\le b$ and $a<c\le d$. If $n$ is sufficiently large, then $\ex(n,S_{a,b},S_{c,d})=\cN(S_{a,b},K_{c,n-c})$.
\end{thm}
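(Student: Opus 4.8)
The plan is to show that an $S_{c,d}$-free graph $G$ on $n$ vertices has at most $\cN(S_{a,b},K_{c,n-c})$ copies of $S_{a,b}$, the lower bound being immediate since $K_{c,n-c}$ contains no $S_{c,d}$ (any center would need degree $\ge c+1$ on the small side, which is impossible as both putative centers of a copy of $S_{c,d}$ would have to see at least $c$ private leaves plus each other). Write $D$ for the set of vertices of $G$ of degree at least $d$; since $a < c \le d$, every copy of $S_{a,b}$ whose center-pair both lie outside $D$ uses only low-degree vertices, and there are only $O(n)$ such copies (each low-degree vertex is in boundedly many copies of $S_{a,b}$ as a center, and every copy has a center). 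So up to an additive $O(n)$ error we may count only copies of $S_{a,b}$ that have a center in $D$.

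The key structural step is to bound $|D|$ and control the edges inside $D$. Because $G$ is $S_{c,d}$-free and $a<c$, no vertex $v \in D$ can be a center of $S_{c,d}$, so for every edge $vw$ with $v\in D$, the other endpoint $w$ has degree at most $c+d-1$ in $G$ minus the neighborhood structure — more carefully: if $v \in D$ has degree $\ge d$ and $w$ is a neighbor of $v$ with $\deg(w) \ge c + (\text{enough to find } c \text{ private leaves})$, we get a copy of $S_{c,d}$ centered at $vw$. Making this precise shows that each $v\in D$ has at most a bounded number of neighbors of degree $\ge c$ (in fact, the neighbors of $v$ of high degree, together with $v$, would themselves form a dense configuration producing $S_{c,d}$), hence $D$ spans only $O(|D|) = O(n)$ edges and, crucially, two vertices of $D$ are rarely adjacent. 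Consequently almost every copy of $S_{a,b}$ counted has exactly one center in $D$, with its other center and all its leaves of bounded degree; I would then argue that $|D| \le c$ up to finitely many exceptional vertices, because $c+1$ pairwise "independent-ish" high-degree vertices would force an $S_{c,d}$ via a sunflower/pigeonhole argument on their neighborhoods (here $d$ comes in: a vertex of degree $\ge d$ adjacent to a vertex of degree $\ge c$ away from it yields the forbidden double star once $n$ is large).

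With $|D|\le c$ established (modulo $O(n)$-many copies), the count of copies of $S_{a,b}$ with a center in $D$ is governed by how the at most $c$ high-degree vertices distribute their neighbors; this is exactly the optimization solved by making $D$ one side of a complete bipartite graph, i.e. by $K_{c,n-c}$. Formally I would compare $G$ to $K_{|D|, n-|D|}$ vertex-class by vertex-class: the number of copies of $S_{a,b}$ with a fixed center set contained in $D$ is a polynomial in the degrees and codegrees that is maximized when $D$ is completely joined to its complement and there are no other edges, and $|D|=c$ beats $|D|<c$ for large $n$ since each additional high-degree vertex contributes $\Theta(n^{\max(a,b)})$ copies. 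The main obstacle is the middle step: ruling out that $D$ is large or that vertices of $D$ are heavily interconnected — one must extract, from $S_{c,d}$-freeness alone, that the top end of the degree sequence looks like $c$ dominating vertices, and the delicate point is handling edges among high-degree vertices and medium-degree vertices simultaneously, where a careful Kővári–Sós–Turán-type counting (a vertex in $\ge \binom{d-1}{?}$ copies of certain stars forces $S_{c,d}$) together with the $c < d$ gap must be combined to close the argument for all sufficiently large $n$.
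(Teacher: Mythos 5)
There is a genuine gap, and it is in the step you yourself flag as the main obstacle: the structural claim that the high-degree vertices form a set $D$ of size at most $c$ (up to finitely many exceptions) with few internal edges. This is false with your threshold $\deg \ge d$. A vertex $v$ of degree $d$ adjacent to a vertex $w$ of degree $\ge c$ does \emph{not} force a copy of $S_{c,d}$: to use $v$ as the $d$-center you need $d$ leaves of $v$ distinct from $w$ and from $w$'s $c$ leaves, which requires $\deg(v)\ge c+d+1$, not $\deg(v)\ge d$. Concretely, a $d$-regular triangle-free graph, or a disjoint union of copies of $K_{d,d}$, is $S_{c,d}$-free for every $c\le d$, yet there every vertex lies in $D$ and $D$ spans all the edges. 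Even with the corrected threshold $c+d+1$ (which does force all neighbors of such a vertex to have degree at most $c$, as in the paper's set $B$), the cardinality claim still fails: a disjoint union of stars $K_{1,c+d+1}$ is $S_{c,d}$-free and has $\Theta(n)$ vertices of degree $c+d+1$. So ``at most $c$ dominating vertices'' is not a consequence of $S_{c,d}$-freeness; it can only emerge at the end, as the outcome of the extremal comparison. Your proposed sunflower/pigeonhole argument for $|D|\le c$ cannot work, and the subsequent ``optimization over $|D|$'' is built on it.

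The paper avoids this trap. It sets $A=\{v:\deg(v)\le c\}$ and $B=\{v:\deg(v)\ge c+d+1\}$, observes that every edge incident to $B$ goes to $A$, and shows that all copies of $S_{a,b}$ whose central edge is not an $A$--$B$ edge number only $O(n)$ (negligible against $\Theta(n^{b+1})$ in $K_{c,n-c}$), while each $A$--$B$ central edge supports at most $\binom{c-1}{a}\binom{|A|-a-1}{b}+\binom{c-1}{b}\binom{|A|-b-1}{a}$ copies because the $A$-endpoint has at most $c-1$ further neighbors. It then fixes the $c$ largest-degree vertices $v_1,\dots,v_c$ and performs a sequence of explicit copy-nondecreasing local switches (attach low-degree vertices to the $v_i$, reroute edges between two degree-$c$ vertices, reroute edges from degree-$c$ vertices to vertices outside $\{v_1,\dots,v_c\}$, and finally absorb the remaining medium-degree vertices) that transforms an extremal graph into $K_{c,n-c}$. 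If you want to salvage your outline, you would need to replace the claim ``$|D|\le c$'' by such a comparison argument; the degree dichotomy alone will not give it to you.
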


The rest of the paper is structured as follows. In Section 2 we state the well-known results we use, and afterwards we state and prove the lemmas needed to prove Theorems \ref{cce} and \ref{fketto}. Section 3 contains the proofs of the theorems stated above.

\section{Preliminaries}

We will use a theorem of Abbott, Hanson and Sauer \cite{ahs}. Let $f(n)$ be equal to $n(n-1)$ if $n$ is even and to $n^2+(n-1)/2$ if $n$ is odd.

\begin{thm}[Abbott, Hanson and Sauer \cite{ahs}]\label{abhasa}
If a graph has $f(n)$ edges, then it contains a star or a matching with $n$ edges.
\end{thm}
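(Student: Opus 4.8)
The plan is to prove the contrapositive. Suppose $G$ contains neither a star with $n$ edges nor a matching with $n$ edges; I will show that $e(G)<f(n)$. The two hypotheses say exactly that $\Delta(G)\le n-1$ (no vertex of degree $n$) and $\nu(G)\le n-1$ (matching number at most $n-1$), so the whole argument is an extremal estimate for graphs satisfying both constraints simultaneously.

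First I would fix a maximum matching $M=\{x_1y_1,\dots,x_my_m\}$, so $m=\nu(G)\le n-1$, and split the vertex set into the $2m$ matched vertices $W=\bigcup_i\{x_i,y_i\}$ and the unmatched set $U=V(G)\setminus W$. Maximality gives the two standard facts that $U$ is an independent set and that every edge of $G$ meets $W$; hence $e(G)=e(W)+e(W,U)$, where $e(W)$ counts edges inside $W$ and $e(W,U)$ edges between $W$ and $U$. Summing degrees over $W$ gives $\sum_{v\in W}\deg(v)=2e(W)+e(W,U)$, and since each such degree is at most $n-1$ we obtain the budget inequality $2e(W)+e(W,U)\le 2m(n-1)$, i.e. $e(W)\le m(n-1)-\tfrac12 e(W,U)$.

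Next I would bound the edges into $U$ using maximality. For a single matched pair $x_iy_i$, a length-three augmenting path shows that $x_i$ and $y_i$ cannot have distinct neighbours in $U$; consequently either only one of them meets $U$ (contributing at most $n-2$ edges after discounting the matching edge) or both meet $U$ only at one common vertex. In either case the pair sends at most $n-2$ edges to $U$, so $e(W,U)\le m(n-2)$. Combining this with the budget inequality yields $e(G)=e(W)+e(W,U)\le m(n-1)+\tfrac12 e(W,U)\le \tfrac12 m(3n-4)$, which for $m\le n-1$ is of the correct order $n^2$ but with a leading constant near $\tfrac32$ rather than $1$.

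The main obstacle is precisely closing this constant-factor gap, and it cannot be done with the per-pair bound alone: that bound still permits a configuration in which many pairs pour nearly $n-2$ edges into $U$ through one endpoint while the other endpoints span a dense subgraph inside $W$, and such a configuration would overshoot $f(n)$. The key observation is that these configurations are never matching-maximal — a dense set of edges among the unsaturated endpoints together with $U$-neighbours at the saturated ones produces a \emph{long} augmenting path — so ruling them out is a global matching question rather than a local one. I would resolve it through the Gallai--Edmonds structure theorem, writing $V(G)=D\sqcup A\sqcup C$ where the components of $G[D]$ are factor-critical, $G[C]$ has a perfect matching with $|C|\le 2(n-1)$, and $A$, of size at most $\nu\le n-1$, carries every edge leaving $D$. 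Bounding the edges within and between these three pieces under $\Delta(G)\le n-1$ is exactly what forbids the internal $W$-edges and the $U$-edges from both being large, and carrying out this bookkeeping — where the parity of $n$ governs the sizes of the near-complete pieces one may pack, and hence the split between $n(n-1)$ for even $n$ and $n^2+(n-1)/2$ for odd $n$ — is where I expect the real effort to lie.
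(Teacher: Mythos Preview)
The paper does not prove this statement at all: Theorem~\ref{abhasa} is quoted as a result of Abbott, Hanson and Sauer \cite{ahs} and used as a black box. In fact the paper explicitly remarks that it only needs the much weaker consequence that any graph $G$ contains a star or a matching with at least $\sqrt{|E(G)|}$ edges, and this is all that is ever invoked (in Lemma~\ref{vc} and in the proof of Theorem~\ref{fketto}). So there is no ``paper's own proof'' to compare against.

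As for your proposal itself: the setup via a maximum matching and the per-pair augmenting-path observation are correct and standard, and you are right that they only yield $e(G)\le \tfrac{1}{2}m(3n-4)$, which overshoots $f(n)$ by a constant factor. You correctly diagnose that the slack comes from allowing many pairs to simultaneously send $n-2$ edges into $U$ while the ``idle'' endpoints span many edges inside $W$, and that this is forbidden by longer augmenting paths. However, the proof is not actually completed: you gesture at Gallai--Edmonds and say the bookkeeping ``is where the real effort lies,'' but you do not carry it out, and the parity-dependent sharp value of $f(n)$ is exactly what makes that bookkeeping delicate. So as written this is a plausible outline rather than a proof. If your goal is only to supply what the present paper needs, the trivial bound $e(G)\le \nu(G)\cdot\Delta(G)\le (n-1)^2$ (from the fact that the matched vertices form a vertex cover) already gives a star or matching of size roughly $\sqrt{|E(G)|}$, which suffices for every application in the paper.
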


Note that $f(n)$ is the best possible bound, but we will only use the weaker result that any graph $G$ contains a star or matching with at least $\sqrt{|E(G)|}$ edges.

We will also use some other well-known results. The K\H ov\'ari-S\'os-Tur\'an theorem \cite{kst} states that $\ex(n,K_{s,t})=O(n^{2-1/s})$. The Erd\H os-Simonovits stability theorem \cite{erd1,erd2,sim} states that for any graph $F$, if $G$ is an $F$-free graph on $n$ vertices with $ \ex(n,F)-o(n^2)$ edges, then we can obtain the $(\chi(f)-1)$-partite Tur\'an graph from $G$ by adding and deleting $o(n^2)$ edges.

$B_t$ denotes the book graph with $t$ pages, i.e. $t$ triangles sharing an edge. Alon and Shikelman \cite{ALS2016} showed $\ex(n,K_3,B_t)=o(n^2)$ for any fixed $t$. In fact, they showed more: for any $\varepsilon>0$ there exists $\delta>0$ such that if an $n$-vertex graph $G$ contains no $B_{\delta n}$, then $G$ contains as most $\epsilon n^2$ triangles.

We continue with the lemmas needed for the proof of Theorem \ref{cce}. 

\begin{lemma}\label{lem1}
Let $t$ be a positive integer, $\beta<1$ and $n$ large enough. Let $G$ be an $n$-vertex $B_t$-free graph with maximum degree $n/2<\Delta<\beta n$. Then $|E(G)|\le\Delta(n-\Delta)$. Moreover, if $G$ contains a triangle, then $|E(G)|\le\Delta(n-\Delta)-cn$ for some $c$ depending on $\beta$ and $t$.
\end{lemma}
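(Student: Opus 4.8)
The plan is to root the argument at a vertex $v_0$ of degree $\Delta$ and to compare $G$ with the complete bipartite graph between $A:=N(v_0)$ and $B^+:=V(G)\setminus A$, where $|A|=\Delta$ and $|B^+|=n-\Delta$. Put $B:=B^+\setminus\{v_0\}$ and $m:=|B|=n-1-\Delta$, write $e(A)$ and $e(B)$ for the numbers of edges inside $A$ and inside $B$, and let $\bar e(A,B)$ denote the number of \emph{non}-edges of $G$ between $A$ and $B$. Every edge of $G$ either joins $v_0$ to $A$ (exactly $\Delta$ such edges), or lies inside $A$, or lies inside $B$, or joins $A$ to $B$ (there are $\Delta m-\bar e(A,B)$ of the last kind), so
\[
|E(G)|=\Delta(n-\Delta)+\bigl(e(A)+e(B)-\bar e(A,B)\bigr).
\]
Thus the first assertion is equivalent to $e(A)+e(B)\le\bar e(A,B)$, and the ``moreover'' clause will follow once this gap is shown to be at least a fixed multiple of $n$ whenever $G$ contains a triangle.

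Next I would record three consequences of $B_t$-freeness. (i) Since each $a\in A$ is adjacent to $v_0$ and the pair $a,v_0$ has at most $t-1$ common neighbours — namely the vertices of $N(a)\cap A$ — the graph $G[A]$ has maximum degree at most $t-1$. (ii) For adjacent $a,a'\in A$, the vertex $v_0$ is a common neighbour, so $a$ and $a'$ have at most $t-2$ common neighbours inside $B$; writing $\mu(a):=m-|N(a)\cap B|$ for the number of missing edges from $a$ to $B$, this gives $\mu(a)+\mu(a')\ge m-t+2$. (iii) From $d(b)\le\Delta$ each $b\in B$ satisfies $|N(b)\cap B|\le\nu(b)$, where $\nu(b):=\Delta-|N(b)\cap A|$; and for adjacent $b,b'\in B$ (which cannot share the neighbour $v_0$) one has $|N(b)\cap N(b')\cap A|\le t-1$, whence $\nu(b)+\nu(b')\ge\Delta-t+1$. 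Note that $\sum_{a\in A}\mu(a)=\sum_{b\in B}\nu(b)=\bar e(A,B)$.

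The core of the proof is a double count of $\sum_{aa'\in E(G[A])}\bigl(\mu(a)+\mu(a')\bigr)$. On the one hand this equals $\sum_{a\in A}|N(a)\cap A|\cdot\mu(a)\le (t-1)\sum_{a\in A}\mu(a)=(t-1)\,\bar e(A,B)$ by (i); on the other hand it is at least $(m-t+2)\,e(A)$ by (ii). Hence $e(A)\le\frac{t-1}{m-t+2}\,\bar e(A,B)$, which is less than $\tfrac12\bar e(A,B)$ once $n$ is large enough that $m-t+2>2(t-1)$ — this is guaranteed by $\Delta<\beta n$, since then $m\ge(1-\beta)n-1$. Combining with $2e(B)=\sum_{b\in B}|N(b)\cap B|\le\sum_{b\in B}\nu(b)=\bar e(A,B)$ yields $e(A)+e(B)<\bar e(A,B)$, and in fact $|E(G)|\le\Delta(n-\Delta)-\tfrac14\bar e(A,B)$ for $n$ large. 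If $G$ contains a triangle then, because $G$ has no edge between $v_0$ and $B$, that triangle must use an edge inside $A$ or an edge inside $B$; by (ii), respectively (iii), this forces $\bar e(A,B)\ge\min\{m-t+1,\ \Delta-t+1\}$, which is at least $c_1n$ for a constant $c_1=c_1(\beta)>0$ (using $m\ge(1-\beta)n-1$ and $\Delta>n/2$). Substituting into the displayed bound gives $|E(G)|\le\Delta(n-\Delta)-cn$ with $c=c(\beta,t)>0$.

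The step I expect to be the main obstacle — and really the whole point of the lemma — is getting the \emph{exact} leading term $\Delta(n-\Delta)$ rather than $\Delta(n-\Delta)+O(n)$. A direct degree count using only ``$d(v)\le\Delta$ for all $v$'' and ``$|N(a)\cap A|\le t-1$ for $a\in A$'' leaves an additive slack of order $\Delta t$, because it does not exploit that an edge inside $A$ is expensive: its two endpoints are then forced to miss about $m$ edges to $B$ on account of their common neighbour $v_0$. The quadratic double count above is exactly the device that converts this local cost into the clean global inequality $e(A)+e(B)\le\bar e(A,B)$; once it is in place, tracking the constants and the ``$n$ sufficiently large'' thresholds (and the dependence of $c$ on $\beta$ and $t$) is routine.
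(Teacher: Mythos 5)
Your proof is correct, but it is a genuinely different argument from the one in the paper. The paper splits into cases: for $\Delta<5n/9$ it iteratively deletes a low-degree vertex of a triangle, and for $\Delta>5n/9$ it runs a structural decomposition (the sets $U$, $W$, $U'$ and a further case analysis on $k$ and $|U\setminus U'|$) that is essentially a warm-up for Lemma~\ref{lem2}. You instead work uniformly over the whole range $n/2<\Delta<\beta n$ from the exact identity $|E(G)|=\Delta(n-\Delta)+e(A)+e(B)-\bar e(A,B)$, and the weighted double count $\sum_{aa'\in E(G[A])}(\mu(a)+\mu(a'))$ — bounded above by $(t-1)\bar e(A,B)$ via $\deg_{G[A]}\le t-1$ and below by $(m-t+2)e(A)$ via the common-neighbour bound — is exactly the right device to show $e(A)+e(B)\le\tfrac34\bar e(A,B)$ once $m-t+2>4(t-1)$, which your hypothesis $\Delta<\beta n$ with $t$ fixed guarantees. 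Both conclusions then drop out at once: the clean bound $|E(G)|\le\Delta(n-\Delta)$ with no case split, and the stronger quantitative form $|E(G)|\le\Delta(n-\Delta)-\tfrac14\bar e(A,B)$, from which the ``moreover'' clause follows since any triangle forces an edge inside $A$ or inside $B$ and hence $\bar e(A,B)\ge\min\{m-t+2,\Delta-t+1\}=\Omega(n)$. What your approach buys is brevity and a single unified mechanism; what it does not buy is the extension needed later in the paper, since the step $\tfrac{t-1}{m-t+2}<\tfrac12$ degrades when $t$ is a constant fraction of $n$ and in any case your argument yields only a loss of $\Omega(n)$ rather than the loss of $\Omega(tn)$ that Lemma~\ref{lem2} extracts — which is presumably why the author set up the heavier $U$, $W$, $U'$ machinery already here. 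As a proof of Lemma~\ref{lem1} as stated, yours is complete and arguably preferable.
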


\begin{proof}
Assume first that $G$ is triangle-free. Let $v$ be a vertex of $G$ with degree $\Delta$, and $U$ be the set of neighbors of $v$. Then $U$ is an independent set, and the $n-\Delta$ vertices in $V(G)\setminus U$ each have degree at most $\Delta$, completing the proof in this case.

Assume now that $G$ contains a triangle $uvw$. Consider first the case that $\Delta<5n/9$. Then any two of the vertices $u,v,w$ have less than $t$ common neighbors, thus $d(u)+d(v)+d(w)<n+3t$. This implies that there is a vertex of degree less than $(n+3t)/3<13n/30$ here we use that $n$ is large enough). We delete such a vertex and denote the remaining graph by $G_1$. We repeat this procedure to obtain $G_2$ and so on, until we either reach a triangle-free graph $G_i$, or we reach $G_{\Delta-\lfloor n/2\rfloor}$. In the first case $G_i$ has maximum degree at least $\Delta-i$, thus $G$ contains at most $(\Delta-i)(n-\Delta)+13in/30$ edges, which completes the proof. In the second case $G_{\Delta-\lfloor n/2\rfloor}$ has at most $n^2/16+13n/30(\Delta-n/2)$ edges, which completes the proof. 

Consider now the case $\Delta>5n/9$. Let $v$ be a vertex of degree $\Delta$ and $U$ be the set of its neighbors. Let $W$ be the set of vertices with degree at least $n/2+t$, then $W$ is an independent set by the $B_t$-free property. This also implies that $U$ and $W$ are disjoint. Let $U'$ be the subset of $U$ consisting of vertices that have at least $(n-\Delta)/2+t$ neighbors in $V(G)\setminus U$, then $U'$ is an independent set. We have at most $t|U\setminus U'|$ edges inside $U$. On the other hand, for every vertex $u\in U\setminus U'$ we have at least $(n-\Delta)/2-t$ vertices outside $U$ not connected to $u$. 

Let us assume that there are $k$ vertices of degree less than $n/2+t$ outside $U$. Then the number of edges in $G$ is at most $k(n/2+t)+(n-\Delta-k)\Delta+t|U\setminus U'|$. Indeed, the first term stands for the edges incident to $V(G)\setminus U\setminus W$, the second term stands for the edges incident to $W$ and the third term stands for the edges inside $U$. The above quantity can be rewritten as $\Delta(n-\Delta)-k(\Delta-n/2-t)+t|U\setminus U'|\le \Delta(n-\Delta)-kn/18+t|U\setminus U'|$. It is easy to see that we are done if $k\ge 1$ and $|U\setminus U'|\le n/17t$, or if $k\ge 19t$. 

We also have $|E(G)|\le t|U\setminus U'|+k(n/2+t)+\Delta(n-\Delta)-|U\setminus U'|((n-\Delta)/2-t)$, as the first term stands for the edges inside $U$, the second term stands for the edges inside $V(G)\setminus U$, the third term stands for the number of pairs $(u,v)$ with $u\in U, v\in V(G)\setminus U$, and the fourth term stands for those such pairs that are not edges of $V(G)$. If $|U\setminus U'|> n/17t$ and $k<19t$, then this bound is $\Delta(n-\Delta)-cn^2$ and we are done. 
\end{proof}



We need a version for books of size increasing with $n$, when we also forbid another graph.

\begin{lemma}\label{lem2}
Let $1/2<\beta$, $\alpha<1/9$, $2\alpha <1-\beta$ and $F$ be a 3-chromatic graph with a color-critical edge. Let $n>n_0(\alpha,\beta,F)$ be an integer large enough, $t\le\alpha n$ and $n/2<\Delta<\beta n$ be positive integers. If $G$ is an $n$-vertex $F$-free graph with maximum degree $\Delta$ such that the largest book in $G$ is $B_t$, then $|E(G)|\le \Delta(n-\Delta)-ctn$ for a constant $c=c(\alpha,\beta,F)$.
\end{lemma}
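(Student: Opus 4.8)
The plan is to run a degree and non-edge analysis in the spirit of the proof of Lemma~\ref{lem1}, but to exploit $F$-freeness, which is much stronger than $B_{t+1}$-freeness, to upgrade the saving from $\Omega(n)$ to $\Omega(tn)$. The structural input from the color-critical edge is this: if $e=x_1x_2$ is a color-critical edge of $F$, then $F-e$ is bipartite, with parts $X\supseteq\{x_1,x_2\}$ and $Y$, so $F$ embeds into the ``generalized book'' $H_s$ on vertex set $\{x_1,x_2\}\cup X'\cup Y'$ with $|X'|=|Y'|=s$, consisting of the edge $x_1x_2$ together with all pairs between $\{x_1,x_2\}\cup X'$ and $Y'$, once $s=s(F)$ is a large enough constant; hence $G$ is $H_s$-free. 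The key consequence I would record: if $B_p$ is any book in $G$, with spine $xy$ and page set $P$ of size $p$, then $H_s$-freeness means no $s$-subset of $P$ has $s$ common neighbours outside $\{x,y\}$, so by the Kővári–Sós–Turán theorem $\sum_{z\in P}d(z)=O\!\left(n+p\,n^{1-1/s}\right)$ with implied constant depending only on $F$. In other words the pages of a book have small total degree, and this bound is smaller than any fixed fraction of $pn$ once $p$ exceeds a constant depending on $F$ and $\beta$.

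Fix such a constant $T_0=T_0(\alpha,\beta,F)$. If $t\le T_0$, then $G$ is $B_{t+1}$-free and contains a triangle, so Lemma~\ref{lem1} gives $|E(G)|\le\Delta(n-\Delta)-c_1(\beta,t+1)n$; only finitely many values of $t$ occur, so choosing $c\le T_0^{-1}\min_{t\le T_0}c_1(\beta,t+1)$ finishes this case. So assume $t>T_0$. Let $v$ be a vertex of degree $\Delta$, put $U=N(v)$ and $W=V(G)\setminus U$. Looking at the edges $vu$ with $u\in U$ and using $B_{t+1}$-freeness, every vertex of $U$ has at most $t$ neighbours inside $U$, whence $e(G[U])\le t\Delta/2$ and $d(u)\le(n-\Delta)+t$ for all $u\in U$. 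Writing $D$ for the number of non-edges between $U$ and $W$, we have $|E(G)|=\Delta(n-\Delta)-D+e(G[U])+e(G[W])$, so the whole problem reduces to proving $D\ge e(G[U])+e(G[W])+ctn$.

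I would deduce this from several lower bounds on $D$. First, from $\sum_{w\in W}d(w)\le\Delta(n-\Delta)$ one gets $e(U,W)\le\Delta(n-\Delta)-2e(G[W])$, hence $D\ge 2e(G[W])$. Second, the book $B_t$ has $t$ pages, at least $t/2$ of which lie on one of the two sides $U$, $W$; by the structural lemma their total degree is $o(tn)$, so they are incident to at least $\tfrac t2\min(\Delta,n-\Delta)-o(tn)\ge c_6tn$ non-edges to the opposite side, giving $D\ge c_6tn$ for a constant $c_6=c_6(\beta)>0$. Third, to handle a large $e(G[U])$: a constant fraction of $2e(G[U])=\sum_u d_U(u)$ is contributed by vertices $u$ with $\Theta(e(G[U])/\Delta)$ neighbours inside $U$, each making $vu$ the non-spine edge of a book with that many pages inside $U$; the structural lemma together with a double count over these $u$ shows that $\Omega(e(G[U])/t)$ vertices of $U$ have degree $o(n)$ and hence miss almost all of $W$, so $D\ge(3-o(1))e(G[U])$ (when $e(G[U])$ exceeds a large constant times $n$), using $\tfrac{n-\Delta}{t}>\tfrac{1-\beta}{\alpha}>2$. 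Fourth, the crude estimate $2|E(G)|\le\sum_z d(z)\le(n-t)\Delta+o(tn)$ gives $|E(G)|\le\Delta(n-\Delta)-\tfrac12\Delta(n-2\Delta+t)+o(tn)$, which already beats $\Delta(n-\Delta)-ctn$ when $\Delta\le n/2+t/4$. A case analysis — on whether $\Delta\le n/2+t/4$, on which of $e(G[U])$, $e(G[W])$ is the larger, and on whether the larger of them exceeds a suitable constant multiple of $tn$ — then combines these four bounds into $D\ge e(G[U])+e(G[W])+ctn$.

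The part I expect to be genuinely delicate is making these bounds cooperate uniformly over the whole range $n/2<\Delta<\beta n$, in particular when $\Delta$ is bounded away from $n/2$ while $e(G[W])$ is close to its maximum $\tfrac14(n-\Delta)^2$ — which is the largest value compatible with $B_{t+1}$-freeness, by supersaturation for triangles. In that regime $G[W]$ is essentially a balanced complete bipartite graph, and one must either verify that the first bound $D\ge 2e(G[W])$ already suffices or bring in the Erdős–Simonovits stability theorem to exclude a large book sitting ``across'' such a near-bipartite $W$. Choosing the thresholds — $T_0$, the various cut-offs, and the final $c$ — so that everything depends only on $\alpha$, $\beta$ and $F$ is the remaining bookkeeping.
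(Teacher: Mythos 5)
Your central mechanism is the right one and matches the paper's key step (Claim~\ref{newcl} in the printed proof): since $F$ embeds in a ``generalized book'' built from a color-critical edge, the Kővári--Sós--Turán theorem forces the $p$ pages of any book to have total degree $O(n+pn^{1-1/s})$, hence to miss $\Omega(pn)$ cross-edges once $p$ exceeds a constant; and the reduction of the case $t\le T_0$ to Lemma~\ref{lem1} is exactly what the paper does. The gap is in the surrounding architecture. You work with the raw bipartition $U=N(v)$, $W=V(G)\setminus U$ and reduce everything to $D\ge e(G[U])+e(G[W])+ctn$, but your third bound does not hold as derived: the claim that a constant fraction of $\sum_u d_U(u)$ comes from vertices with $\Theta(e(G[U])/\Delta)$ neighbours in $U$ is a dyadic-pigeonhole statement that is false without a logarithmic loss and a shift of scale, and, worse, when $G[U]$ is (say) a near-perfect matching every relevant book has $O(1)$ pages, so your structural lemma returns the vacuous bound $d(z)=O(n)$ and does \emph{not} yield ``$\Omega(e(G[U])/t)$ vertices of $U$ of degree $o(n)$.'' The honest bound from $B_{t+1}$-freeness is $D\ge e(G[U])\bigl(|W|-t\bigr)/t$, and the final convex combination of your four estimates then requires roughly $3\alpha<1-\beta$ rather than the hypothesis $2\alpha<1-\beta$; together with the regime $e(G[W])=\Theta\bigl((n-\Delta)^2\bigr)$, which you explicitly leave open, the case analysis you defer to the last paragraph does not close.

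The paper avoids this entirely by cleaning the bipartition \emph{before} counting: it first disposes of the cases where the set $W$ of high-degree vertices outside $U$ is small, where $W$ has an internal edge (impossible, as two such vertices have $\ge\alpha n$ common neighbours), and where many vertices of $U$ have few neighbours in $W$ — each by a direct edge count losing $\Omega(\varepsilon^3n^2)$. What remains is a bipartition $(U'',W')$ in which each side contains only $O(\varepsilon^2 n)$ internal edges, so the internal-edge terms are negligible against the $c_0tn$ missing cross-edges produced by the book claim, and no delicate balancing is needed. If you want to salvage your route, you should prove the analogous statement — that after discarding a few cheap cases both sides of the partition are nearly independent — rather than trying to beat $e(G[U])+e(G[W])$ head-on.
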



\begin{proof}
We start by picking an $\varepsilon>0$ sufficiently small with respect to $\alpha$, $\beta$ and $F$. We will use that $\varepsilon$ is small in multiple instances; we will not list the exact assumptions.
We choose $n$ large enough with respect to $\alpha$, $\beta$, $F$ and $\varepsilon$; again we do not list the exact assumptions here.

Let $v$ be a vertex of degree $\Delta$ and $U$ be the set of its neighbors. Clearly the property that $H$ is $F$-free implies that a bipartite graph is forbidden inside $U$, thus there are at most $\varepsilon^4 n^2/3$ edges inside $U$ by the K\H ov\'ari-S\'os-Tur\'an theorem. Let $W$ be the set of vertices outside $U$ with degree at least $\Delta-\varepsilon n$. Assume that $|W|\le n-\Delta-\varepsilon^3 n/2$. Then the total number of edges in $G$ is at most $\Delta|W|+(\Delta-\varepsilon n)(n-\Delta-|W|)+\varepsilon^4 n^2/3=\Delta(n-\Delta)-\varepsilon n(n-\Delta-|W|)+\varepsilon^4 n^2/3\le \Delta(n-\Delta)-\varepsilon^4 n^2/6$, completing the proof. Thus we will assume that $|W|> n-\Delta-\varepsilon^3 n/2$. Two vertices of $W$ have at least $\alpha n$ common neighbors (since $\varepsilon$ is small enough), thus there are no edges inside $W$. This implies that all but at most $(\varepsilon^3/2+\varepsilon^4/2)n^2$ edges of $G$ go between $U$ and $W$.

Let $u,u'\in U$ with $uu'\in E(G)$. Then $u$ and $u'$ have at most $t\le \alpha n$ common neighbors, in particular the number of neighbors of $u$ in $W$ plus the number of neighbors of $u'$ in $W$ is at most $n-\Delta+\alpha n$. Let $U'$ denote the set of vertices in $U$ with more than $|W|-\varepsilon n\ge n-\Delta-2\varepsilon n>(n-\Delta+\alpha n)/2$
neighbors in $W$, then $U'$ is an independent set. Every vertex of $U\setminus U'$ is connected to at most $|W|-\varepsilon n$ vertices of $W$, thus at least $\varepsilon n|U\setminus U'|$ edges are missing between $U$ and $W$. 

If $|U\setminus U'|\ge \varepsilon^2 n$, then the total number of edges is at most $|U||W|-\varepsilon^3 n^2+(\varepsilon^3/2+\varepsilon^4/2)n^2= |U||W|-(\varepsilon^3-\varepsilon^4)n^2/2\le  \Delta(n-\Delta)-(\varepsilon^3-\varepsilon^4)n^2/2$, completing the proof. Thus we will assume that $|U\setminus U'|\le \varepsilon^2 n$.

We found a bipartite subgraph $G'$ of $G$ with parts $U'$ of size at least $\Delta-\varepsilon^2 n$ and $W$ of size at least $n-\Delta-\varepsilon^3n/2\ge n-\Delta-\varepsilon^2 n$. 
Moreover, every vertex of $G'$ is connected to all but at most $\varepsilon n$ vertices of the other part of $G'$.

Let $x$ be a vertex of $G$ not in $G'$. If $x$ has a set $A$ of $|V(F)|$ neighbors in $W$, then the vertices in $A$ have at least $|U'|-\varepsilon n|V(F)|$ common neighbors in $U'$. Because of the $|V(F)|$-free property, $x$ is connected to less than $|V(F)|$ of those neighbors, thus to at most $|V(F)|(\varepsilon n+1)$ vertices of $U'$. Similarly, if $x$ has at least $|V(F)|$ neighbors in $U'$, then $x$ has at most $|V(F)|(\varepsilon n+1)$ neighbors in $W$. Thus either $x$ is connected to at most $|V(F)|(\varepsilon n+1)$ vertices on both sides of $G'$, or $x$ is connected to less than $|V(F)|$ vertices on one side.

We denote by $U''$ the union of $U'$ and the the set of vertices outside $G'$ that are connected to less than $|V(F)|$ vertices of $U'$ and at least $t+1$ vertices of $W$, and similarly we denote by $W'$ the union of $W$ and the the set of vertices outside $G'$ that are connected to less than $|V(F)|$ vertices of $W$ and at least $t+1$ vertices of $U'$. Then the number of edges inside $U''$ is at most $2|V(F)|\varepsilon^2n$, 
as the at most $2\varepsilon^2n$ vertices added to the independent set $U'$ are each belong to less than $|V(F)|$ such edges. Similarly there are at most $2|V(F)|\varepsilon^2n$ edges inside $W'$.

\begin{clm}\label{newcl}
There is a $p_0$ such that if $p\ge p_0$ and we have a $B_p$ in $G$ with vertices $x,y,z_1,\dots,z_p$, edges $xy$, $xz_i$, $yz_i$ for $i\le p$ and $z_1,\dots,t_p\in U''\cup W'$, then at least $c_0pn$ edges are missing between $U''$ and $W'$ for some constant $c_0$ depending only on $F$ and $\beta$.
\end{clm}

\begin{proof}
 There is a $p_0(F)$ such that if $p\ge p_0(F)$, then $\ex(2p,K_{|V(F)|,|V(F)})\le \varepsilon p^2$ by the K\H ov\'ari-S\'os-Tur\'an theorem. At most $\varepsilon p^2$ edges go from $z_1,\dots,z_p$ to any set of $p$ vertices not containing $x$ and $y$, hence at most $\lceil n/p\rceil \varepsilon p^2+2p\le 2p\varepsilon n$ edges are incident to $z_1,\dots,z_p$. Thus at least $p(n-\Delta-\varepsilon^2 n-2\varepsilon n)/2\ge pn(1-\beta-\varepsilon^2-2\varepsilon)/2\ge pn(1-\beta)/3$ edges are missing between $U''$ and $W'$ if $\varepsilon$ is small enough.
\end{proof}

Assume first that each vertex of $V(G)\setminus V(G')$ is in $U''\cup W'$.
Observe that $U\subset U''$. Indeed, if a vertex $u\in U$ is connected to $t+1$ vertices of $U'$, then $uv\in E(G)$ and $u$ and $v$ have $t+1$ common neighbors, forming $B_{t+1}$, a contradiction. Therefore, $|U''|\ge \Delta$. 


If $t<p_0(F)$, the statement we want to prove simply follows from Lemma \ref{lem1}. If $t\ge p_0(F)$, then Claim \ref{newcl} with $p=t$ completes the proof, since the number of edges that are not between $U''$ and $W'$ is at most $4|V(F)|\varepsilon^2n\le c_0tn/2$.


Assume now that there are $1\le k\le 2\varepsilon^2n$ vertices not in $U''\cup W'$. They are either each connected to at most $t$ vertices of both $U'$ and $W$, or to less than $|V(F)|(\varepsilon n+1)$ vertices of both $U'$ or $W$. Observe that $\max\{|V(F)|(\varepsilon n+1),t\}\le (n-\Delta-\varepsilon n)/2$ for $\varepsilon$ small enough, thus these $k$ vertices are incident to at most $k(n-\Delta-\varepsilon n)$ edges.
Then $|U|\ge \Delta-k$, thus $|U''||W'|\le (\Delta-k)(n-\Delta)$.
The number of edges in $G$ is at most $(\Delta-k)(n-\Delta)+4|V(F)|\varepsilon^2n+k(n-\Delta-\varepsilon n)\le \Delta(n-\Delta)-k\varepsilon n/2$. This completes the proof if $t<2p_0(F)$ or if $k>t/2$. In the remaining case, at least $t/2$ vertices of $z_1,\dots,z_t$ are in $U''\cup W'$, thus we can apply Claim \ref{newcl} with $p=\lceil t/2\rceil$. Then at least $c_0\lceil t/2\rceil n$ edges are missing between $U''$ and $W'$, and the number of edges not between $U''$ and $W'$ is at most $4|V(F)|\varepsilon^2n+k|V(F)|(\varepsilon n+1)\le c_0tn/3$ if $\varepsilon$ is small enough.
\end{proof}

Let us continue with the lemmas needed for the proof of Theorem \ref{fketto}.
A \textit{vertex cover} is a set of vertices incident to each of the edges.

\begin{lemma}\label{vc}
If $G$ is an $n$-vertex $F_2$-free graph and $A$ is a vertex cover with $|A|<n/4$, then $|E(G)|<|A|(n-|A|)+1$.
\end{lemma}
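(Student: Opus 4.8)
The plan is to prove the equivalent statement $|E(G)|\le |A|(n-|A|)$, which is exactly what the displayed strict inequality asserts once one notes that both sides are integers. Write $a=|A|<n/4$ and let $B=V(G)\setminus A$; since $A$ is a vertex cover, $B$ is an independent set of size $n-a$, so every edge of $G$ lies inside $A$ or between $A$ and $B$. Let $e_A$ be the number of edges inside $A$ and let $\overline{e}_{AB}$ be the number of non-adjacent pairs $(x,w)$ with $x\in A$, $w\in B$; then the number of $A$--$B$ edges is $a(n-a)-\overline{e}_{AB}$. Hence the target bound $|E(G)|\le a(n-a)$ is precisely the inequality $e_A\le \overline{e}_{AB}$, i.e.\ the graph $H:=G[A]$ has at most as many edges as there are missing pairs between $A$ and $B$. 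The whole proof reduces to establishing this one inequality.

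The main tool will be the standard reformulation of $F_2$-freeness: $G$ contains no $F_2$ if and only if for every vertex $z$ the neighbourhood $N(z)$ induces a subgraph with no two independent edges, i.e.\ a star or a triangle. The key consequence, which drives everything, is the following. If $uv$ and $xy$ are two vertex-disjoint edges of $H$, then no vertex $w\in B$ can be adjacent to all four of $u,v,x,y$: otherwise $N(w)$ would contain the two independent edges $uv$ and $xy$, forming an $F_2$ centred at $w$. Consequently each $w\in B$ misses at least one of $u,v,x,y$, which already yields at least $|B|=n-a$ missing $A$--$B$ pairs incident to $\{u,v,x,y\}$.

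The core step is then to bound $e_A$ against $\overline{e}_{AB}$ by splitting on the matching number of $H$. If $H$ has two disjoint edges, the observation above harvests at least $n-a$ missing pairs, and since $a<n/4$ forces $|B|>3a$, this pool of non-edges is large compared with $e_A$; I would make this quantitative by iterating over disjoint edges of $H$ and charging the edges of $H$ injectively into these forced non-edges. If instead $H$ has matching number at most $1$, then $H$ is a star or a triangle, so $e_A$ is small (at most $a-1$ in the star case, exactly $3$ in the triangle case), and I would charge each edge of $H$ directly to a dedicated missing pair: for an edge $uv$ of $H$ whose endpoints are not both complete to $B$, a missing pair at $u$ or $v$ is available, and the independence of $B$ together with the neighbourhood condition keeps these charges injective.

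The step I expect to be the \emph{main obstacle} is the matching-number-$\le 1$ case when $H$ contains an edge both of whose endpoints are complete to $B$ — in the extreme, a single such edge with no further structure, which is the complete-bipartite-plus-one-edge configuration where the inequality is tightest and $\overline{e}_{AB}$ threatens to be as small as possible. All the force of the hypothesis $|A|<n/4$ (equivalently $|B|>3|A|$) must be spent here: one has to show that such a ``complete-to-$B$'' edge cannot coexist with additional edges of $H$ without, via the disjoint-edge observation, producing strictly more missing $A$--$B$ pairs than edges of $H$, and to pin down the unique extremal configuration exactly. Making this trade-off quantitative and controlling this delicate case is the heart of the argument; once it is settled, the remaining counting for the two regimes above is routine.
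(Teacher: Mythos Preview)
Your reduction to $e_A\le\overline e_{AB}$ is already doomed: the very configuration you single out as the ``main obstacle'' is a counterexample. Take $G=K_{|A|,n-|A|}^+$ with the extra edge inside $A$; this graph is $F_2$-free (every neighbourhood induces a star or an independent set), $A$ is a vertex cover, and here $e_A=1$ while $\overline e_{AB}=0$, so $|E(G)|=|A|(n-|A|)+1$ and the displayed strict inequality fails. If you read the paper's argument, in its final case ($\ell\le 1$ with $A'=A$) it in fact only obtains $|E(G)|\le |A|(n-|A|)+1$; the ``$<$'' in the statement is a slip, and the non-strict bound is all that is used downstream in Lemma~\ref{deltaersim}. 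So the correct target is $e_A\le\overline e_{AB}+1$, and the single complete-to-$B$ edge is the unique tight case rather than something to be argued away.

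Even aiming at the corrected inequality, your matching-number-$\ge 2$ case has a real gap. One pair of disjoint edges in $H$ produces only $|B|=n-a$ forced non-edges, whereas $e_A$ can be of order $a^2$ (around $n^2/64$ when $a$ is near $n/4$), so ``this pool of non-edges is large compared with $e_A$'' is simply false, and ``iterating over disjoint edges'' does not help without a mechanism tying the total edge count of $H$ to the size of a matching or star you can extract. That mechanism is exactly what the paper supplies: it first sets aside the vertices of $A$ with at most $n/2$ neighbours in $B$ (each already contributing more than $|A|$ missing pairs, paying for all edges they meet), and on the remaining set $A'$ applies the Abbott--Hanson--Sauer theorem to get a matching \emph{or} a star of size $\sqrt{\ell}$ from $\ell$ edges, converting either structure into roughly $\sqrt{\ell}\,(n/2-|A|)>\ell$ missing pairs. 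Both ingredients --- the degree-based preprocessing (needed so that a star centre is guaranteed many $B$-neighbours, which is what actually forces the missing edges in the star case) and the $\sqrt{\ell}$ extraction --- are absent from your outline, and without them the charging you describe cannot be made injective.
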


\begin{proof}
Assume that $A$ is a vertex cover. Let us delete each vertex of $A$ that is connected to at most $n/2$ vertices of $V(G)\setminus A$, and denote the resulting set by $A'$.
Assume that there are $\ell>1$ edges inside $A'$. Then there is a matching or star in $A'$ with $\sqrt{\ell}$ edges by Theorem \ref{abhasa}. 

Observe that a vertex outside $A$ can be connected to at most $m+1$ vertices of a matching with $m$ edges inside $A'$, because of the $F_2$-free property. For a star inside $A'$ with $m$ edges, at most one vertex outside $A$ can be connected to its center and two other vertices. Therefore, $n-|A|-1$ vertices outside $A$ are each either connected to at most 2 vertices of the star, or are not connected to the center. The latter possibility occurs for less than $n/2$ vertices outside $A$, as the center of the star is in $A'$. This means that in both cases at least $(m-1)(n/2-|A|)$ edges are missing between $A'$ and $V(G)\setminus |A|$.

The number of edges inside $A$ is at most $\ell+|A||A\setminus A'|$, while the number of edges missing between $A$ and $V(G)\setminus |A|$ is at least $(\sqrt{\ell}-1)(n/2-|A|)+|A\setminus A'|(n/2-|A|)$. Observe that $\ell\le \binom{|A'|}{2}<|A'|^2/2$, thus $\sqrt{2\ell}<|A'|<n/2-|A|$, which implies that  $\ell=\sqrt{\ell/2}\sqrt{2\ell}<(\sqrt{\ell}-1)(n/2-|A|)$. We also have $|A||A\setminus A'|<|A\setminus A'|(n/2-|A|)$, and the two inequalities combined mean that more edges are missing between $A$ and $V(G)\setminus |A|$ than the number of edges inside $A$, which completes the proof.

Finally, if $\ell\le 1$, then we have at most $\ell+|A||A\setminus A'|$ edges inside $A$ and at least $|A\setminus A'|(n/2-|A|)\ge |A||A\setminus A'|$ edges are missing between $A$ and $V(G)\setminus A$, completing the proof.
\end{proof}

\begin{lemma}\label{deltaersim} Let $a$ and $b$ be positive integers.
For any $\varepsilon>0$ there exists $\delta>0$ such that if $G$ is an $n$-vertex $F_2$-free graph with maximum degree $\Delta>n/2$ and at least $\Delta(n-\Delta)-\delta n^2$ edges, then $G$ can be turned to a bipartite graph $G'$ by deleting at most $\varepsilon n^2$ edges. 
\end{lemma}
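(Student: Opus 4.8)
The plan is to fix a vertex $v$ of $G$ with $d(v)=\Delta$, set $U:=N(v)$ (so $|U|=\Delta$) and $R:=V(G)\setminus(U\cup\{v\})$ (so $|R|=n-1-\Delta$ and no vertex of $R$ is adjacent to $v$), and then show that deleting every edge of $G$ lying inside $U$ together with every edge lying inside $R$ leaves only edges between $U$ and $\{v\}\cup R$; the resulting graph $G'$ is bipartite. Thus the whole problem reduces to bounding the number $e(G[U])$ of edges inside $U$ and the number $e(G[R])$ of edges inside $R$; I will show each is at most $\delta n^2+O(n)$ for a suitable $\delta=\delta(\varepsilon)$, which is below $\varepsilon n^2/2$ once $\delta$ is small and $n$ is large.

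Bounding $e(G[U])$ is the easy step, and it is the only place the $F_2$-free hypothesis is used. If $U=N(v)$ contained two independent edges $a_1a_2$ and $a_3a_4$, then $va_1a_2$ and $va_3a_4$ would be triangles sharing only the vertex $v$, i.e.\ a copy of $F_2$. Hence $G[U]$ has no matching of size two, so it is a star or a triangle together with isolated vertices, and therefore $e(G[U])\le\Delta-1$ (for $n$ large).

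Bounding $e(G[R])$ is the heart of the argument, and the key point is a ``budget'' inequality on the edges between $U$ and $R$: for each $r\in R$ we have $r\not\sim v$ and $d(r)\le\Delta$, so $|N(r)\cap U|=d(r)-|N(r)\cap R|\le\Delta-|N(r)\cap R|$, and summing over $r\in R$ gives $e(U,R)\le|R|\Delta-2e(G[R])$. The edges of $G$ are exactly the $\Delta$ edges at $v$, the $e(G[U])$ edges inside $U$, the $e(U,R)$ edges between $U$ and $R$, and the $e(G[R])$ edges inside $R$ (there are no edges between $v$ and $R$), so using $|R|\Delta=\Delta(n-\Delta)-\Delta$ we obtain
\[
|E(G)|\le\Delta+(\Delta-1)+\bigl(|R|\Delta-2e(G[R])\bigr)+e(G[R])=\Delta(n-\Delta)+\Delta-1-e(G[R]).
\]
Comparing this with the hypothesis $|E(G)|\ge\Delta(n-\Delta)-\delta n^2$ immediately gives $e(G[R])\le\delta n^2+\Delta-1\le\delta n^2+n$.

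Putting the two bounds together, the number of deleted edges is $e(G[U])+e(G[R])\le(\Delta-1)+(\delta n^2+n)\le\delta n^2+2n$, which is at most $\varepsilon n^2$ once we take $\delta:=\varepsilon/2$ and $n$ large enough that $2n\le\frac{\varepsilon}{2}n^2$. The obstacle I anticipated before finding this argument is the full range $n/2<\Delta<n$: when $\Delta$ is bounded away from both $n/2$ and $n$, the hypothesis $|E(G)|\ge\Delta(n-\Delta)-\delta n^2$ is much weaker than the extremal bound $\lfloor n^2/4\rfloor+1$ for $F_2$-free graphs, so the Erd\H os--Simonovits stability theorem does not apply directly and the bipartition $(U,V(G)\setminus U)$ is not obviously good. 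The budget inequality $e(U,R)\le|R|\Delta-2e(G[R])$ is precisely what converts the lower bound on $|E(G)|$ into an upper bound on $e(G[R])$, uniformly in $\Delta$, and so resolves this point.
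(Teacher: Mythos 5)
Your proof is correct, but it follows a genuinely different and more elementary route than the paper. The paper splits into two cases: for $\Delta\le n/2+\alpha n$ it invokes the Erd\H os--Simonovits stability theorem directly, and for $\Delta$ bounded away from $n/2$ it controls the edges of $G$ induced on $V(G)\setminus U$ via a vertex-cover argument, relying on Lemma \ref{vc} and hence on the Abbott--Hanson--Sauer theorem (Theorem \ref{abhasa}). You avoid both ingredients with a single uniform counting argument: writing $|E(G)|=\Delta+e(G[U])+e(U,R)+e(G[R])$ and observing that each vertex $r\in R$ satisfies $|N(r)\cap U|\le\Delta-|N(r)\cap R|$, so that every edge inside $R$ forces two missing $U$--$R$ edges, you get $|E(G)|\le\Delta(n-\Delta)+e(G[U])-e(G[R])$; the near-extremal edge count then caps $e(G[R])$ by $\delta n^2+O(n)$, while $F_2$-freeness is used only in the easy step that $N(v)$ spans no two independent edges, giving $e(G[U])\le\Delta-1$. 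All steps check out (in particular the decomposition of $E(G)$ is exhaustive since $v$ has no neighbors in $R$, and the resulting graph is bipartite between $U$ and $\{v\}\cup R$). Your argument is shorter, self-contained, and quantitatively sharper: it shows one can reach a bipartite graph by deleting at most $\delta n^2+2n$ edges, i.e.\ $\delta$ can be taken linear in $\varepsilon$, whereas the paper's route inherits the weaker dependence of the stability theorem. One cosmetic point: like the paper's proof, yours needs $n$ large relative to $\varepsilon$ (to absorb the $O(n)$ term), which is implicit in the lemma as used but worth stating.
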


We remark that if $\Delta=n/2+o(n)$, then this is the Erd\H os-Simonovits stability.

\begin{proof} By the Erd\H os-Simonovits stability theorem, there is an $\alpha>0$ such that if $G$ has at least $n^2/4-\alpha n^2$ edges, then $G$ can be made bipartite by deleting at most $\varepsilon n^2/2$ edges. We can assume that $\alpha$ is small enough. Assume first that $\Delta\le n/2+\alpha n$, and let $\delta=\alpha/2$. Then $G$ has at least $\Delta(n-\Delta)-\delta n^2\ge n^2/4-(\alpha^2+\delta)n^2\ge n^2/4-\alpha n^2$ edges, thus the conclusion holds.

Let us assume now that $\Delta>n/2+\alpha n$ and let $v$ be a vertex of degree $\Delta$ and $U$ be the set of its neighbors. Then there are no independent edges inside $U$, hence there are at most $n$ edges inside $U$. Consider the bipartition of $G$ into $U$ and $V(G)\setminus U$. Let $G'$ be the subgraph of $G$ induced on $V(G)\setminus U$. We are done unless $|E(G')|\ge \varepsilon n^2-n$. Let $u\in U$ and let $W$ be the set of vertices in $V(G)\setminus U$ that are not connected to $u$. Then $W$ covers almost all the edges of $G'$. More precisely, there is a vertex $w$ such that $W\cup \{w\}$ is a vertex cover of $G'$. Indeed, there can be only a star outside $W$ in $G'$, as otherwise there are two independent edges in the neighborhood of $u$.

We pick $u\in U$ such a way that $W$ is the smallest, then at least $\Delta|W|$ edges are missing between $U$ and $V(G)\setminus U$. By Lemma \ref{vc}, if $|W|+1<n/4$, then $|E(G')|<(|W|+1)(n-\Delta-|W|-1)+1$. If $|W|<\varepsilon n$, 
this contradicts our assumption that $|E(G')|\ge \varepsilon n^2-n$. Therefore, we can assume that $|W|\ge\epsilon n$. 
 The number of edges in $G$ is at most $\Delta(n-\Delta)-\Delta|W|+(|W|+1)(n-\Delta-|W|-1)+1<\Delta(n-\Delta)-\delta n^2$ if $\delta$ is small enough, a contradiction.

If $|W|+1\ge n/4$, then we use the simpler bound $|E(G')|\le \binom{|V(G')|}{2}$, thus the number of edges in $G$ is at most $\Delta(n-\Delta)-\Delta|W|+\binom{n/2-\alpha n}{2}<\Delta(n-\Delta)-\delta n^2$ if $\delta$ is small enough, a contradiction completing the proof.
\end{proof}

\section{Proofs}
Recall that Theorem \ref{cce} states that if $F$ has chromatic number three and a color-critical edge, then $S_{a,b}$ is weakly $F$-Tur\'an-good.
Before the proof, we describe the main component of the proof of Theorem \ref{GyWW} in \cite{gyww}. We let $f(x,y)=\binom{x-1}{a}\binom{y-1}{b}+\binom{y-1}{a}\binom{x-1}{b}$ if $a\neq b$ and $f(x,y)=\binom{x-1}{a}\binom{y-1}{b}$ if $a=b$.
Then $f(d(u),d(v))$ is the number of copies of $S_{a,b}$ with central edge $uv$. Gy\H ori, Wang and Woolfson observed that if $G$ is a triangle-free graph with maximum degree $\Delta\ge n/2$, then $f(d(u),d(v))\le \max_{n/2\le \Delta'\le \Delta} f(\Delta',n-\Delta')$. It is easy to see that $K_{\Delta',n-\Delta'}$ has at least as many edges as $G$, and each edge is the central edge of as least as many copies of $S_{a,b}$ as any edge of $G$. 

\begin{proof}[Proof of Theorem \ref{cce}]
Let us assume indirectly that there is an infinite sequence of counterexamples, i.e. graphs $G_1,G_2,\dots$ such that $\ex(n_i,S_{a,b},F)=\cN(S_{a,b},G_i)>\cN(S_{a,b},K_{m,n_i-m})$ for every $m$, and $n_i>n_{i-1}$. 
Let us consider first the case that there exists a constant $\beta<1$ such that for each $G_i$, the maximum degree $\Delta_i<\beta n_i$. 

First we show that $\Delta_i>n_i/2$. Indeed, otherwise $K_{\lfloor n_i/2\rfloor,\lceil n_i/2\rceil}$ has at least $|E(G)|$ edges, and each edge of $K_{\lfloor n_i/2\rfloor,\lceil n_i/2\rceil}$ is the central edge of $\binom{\lfloor n_i/2\rfloor-1}{a}\binom{\lceil n_i/2\rceil-1}{b}+\binom{\lceil n_i/2\rceil-1}{a}\binom{\lfloor n_i/2\rfloor-1}{b}$ copies of $S_{a,b}$. On the other hand, in $G_i$ each edge is the central edge of at most $\binom{\Delta_i-1}{a}\binom{\Delta_i-1}{b}+\binom{\Delta_i-1}{a}\binom{\Delta_i-1}{b}$ copies of $S_{a,b}$. This implies that $\cN(S_{a,b},G_i)\le \cN(S_{a,b},K_{\lfloor n_i/2\rfloor,\lceil n_i/2\rceil})$, a contradiction.

Thus, we can assume that $\Delta_i>n_i/2$. Let us pick $\varepsilon>0$. By a result of Alon and Shikhelman \cite{ALS2016} mentioned earlier, there exists an $\alpha>0$ such that any $B_{\alpha n}$-free graph has at most $\varepsilon n^2$ triangles, if $n$ is large enough. We choose 
an $i$ such that $n_i$ is large enough. Let $G=G_i$, $n=n_i$ and $\Delta=\Delta_i$. Let $B_t$ be the largest book in $G$ with vertices $u,v$ connected to each other and to $z_1,\dots,z_t$.
We consider first the case when $t\ge\alpha n$. Then there is no copy of $K_{|V(F)|,|V(F)|}$ with one part in $Z=\{z_1,\dots,z_t\}$. Indeed, such a copy together with $u$ and $v$ would contain $F$. This implies with the K\H ov\'ari-S\'os-Tur\'an theorem that there are $o(n^2)$ edges incident to $Z$ in $G$. Let us consider the copies of $S_{a,b}$ that contain a vertex from $Z$. We can count them by picking an edge incident to $Z$ $o(n^2)$ ways, then $a+b$ other vertices $O(n^{a+b})$ ways, and then a copy of $S_{a,b}$ on these $a+b+2$ vertices, constant number of ways. This shows that there are $o(n^{a+b+2})$ copies of $S_{a,b}$ that contain a vertex from $Z$. Let us delete all the edges incident to $Z$, and add a copy of $K_{\lfloor t/2\rfloor,\lceil t/2\rceil}$ on $Z$. The resulting graph is clearly $F$-free and contains $\cN(S_{a,b},G)-o(n^{a+b+2})+\Omega(n^{a+b+2})>\cN(S_{a,b},G)$ copies of $S_{a,b}$, a contradiction.

Assume now that  $t<\alpha n$. If $t=0$, then Theorem \ref{GyWW} gives the proof. If $t>0$, then we can apply Lemma \ref{lem2} to show that $G$ has at most $\Delta(n-\Delta)-ctn$ edges. Observe that for every edge $uv$ we have that $d(u)+d(v)\le n+t$. 

Consider the maximum of the function $f(x,y)$ where $x+y\le z$ and $x,y\le \Delta$. Clearly this maximum is obtained by $f(\Delta',z-\Delta')$ for some $z/2\le \Delta'\le \Delta$. We apply it for $z=n$, thus for $x,y\le n$ we have $f(x,y)\le f(m,n-m)$ for some $n/2\le m\le \Delta$. If $x',y'\le n+g$ for some $g$, then we have that $f(x',y')\le f(m',n+g-m')\le f(m',n-m')+c'gn^{a+b-1}\le f(m,n-m)+c'gn^{a+b-1}$ for some $n/2\le m'\le \Delta$ and a constant $c'$ depending only on $a$ and $b$.
Observe that each edge of $K_{m,n-m}$ is the central edge of $f(m,n-m)$ copies of $S_{a,b}$, and $K_{m,n-m}$ has $m(n-m)\ge \Delta(n-\Delta)$ edges. 

For an edge $uv$, let $g(uv)$ denote the number of common neighbors of $u$ and $v$. Clearly $\sum_{uv\in E(G)}g(uv)\le 3\cN(K_3,G)\le 3\varepsilon n^2$. We have $d(u)+d(v)\le n+g(uv)$, thus $f(d(u),d(v))\le f(\Delta'',n+g(uv)-\Delta'')$ for some $\Delta''\le \Delta$. We have $f(\Delta'',n+g(uv)-\Delta'')\le f(\Delta'',n-\Delta'')+g(uv)n^{a+b-1}\le f(m,n-m)+g(uv)n^{a+b-1}$. Therefore, $\cN(S_{a,b},G)=\sum_{uv\in E(G)}f(d(u),d(v))\le |E(G)|f(m,n-m)+\sum_{uv\in E(G)}g(uv)n^{a+b-1}\le |E(G)|f(m,n-m)+3\varepsilon n^{a+b+1}\le (\Delta(n-\Delta)-ctn)f(m,n-m)+3\varepsilon n^{a+b+1}\le m(n-m)f(m,n-m)-ctnf(m,n-m)+3\varepsilon n^{a+b+1}\le m(n-m)f(m,n-m)-c''tn^{a+b+1}+3\varepsilon n^{a+b+1}$ for some constant $c"'$ depending on $\alpha,\beta,F$. This completes the proof if $t\ge 3\varepsilon/c''$. 

For $t<3\varepsilon/c''$, we use Lemma \ref{lem1}. It shows that $G$ has at most $\Delta(n-\Delta)-c_1tn$ edges, where $c_1$ does not depend on $\alpha$, hence it does not depend on $\varepsilon$. The same calculation as above gives $\cN(S_{a,b},G)\le m(n-m)f(m,n-m)-c_1'tn^{a+b+1}+3\varepsilon n^{a+b+1}$ for some $c_1'$ depending on $\beta$ and $F$. This completes the proof as $\varepsilon$ is small enough.

Finally, if $\beta$ does not exist, then we pick a $\gamma>0$ small enough and we can pick an $n$-vertex $F$-free graph $G$ with maximum degree $\Delta>(1-\gamma)n$ and $\cN(S_{a,b},G)=\ex(n,S_{a,b},F)$ such that $n$ is large enough. Let $v$ be a vertex of maximum degree $\Delta$ and $U$ be its neighborhood. Observe that if we delete the endvertex of a color-critical edge of $F$, we obtain a bipartite
graph $F''$. Then there is no $F''$ on $U$, thus by the K\H ov\'ari-S\'os-Tur\'an theorem, there are at most $\gamma n^2$ edges inside $U$ if $n$ is large enough. As there are at most $\gamma n$ vertices outside $U$, there are at most $\gamma n^2$ edges of $G$ incident to a vertex outside $U$, thus $|E(G)|\le 2\gamma n^2$. This implies that $\cN(S_{a,b},G)\le 2\gamma n^{a+b+2}$, as we can pick the central edge first, and then we can pick each other vertex at most $n$ ways. Clearly $\cN(S_{a,b},K_{\lfloor n/2\rfloor,\lceil n/2\rceil})> 2\gamma n^{a+b+2}$ if $\gamma$ is small enough and $n$ is large enough, giving us the contradiction that completes the proof.
\end{proof}


Recall that Theorem \ref{fketto} states that for sufficiently large $n$ we have $\ex(n,S_{a,b},F_2)=\cN(S_{a,b},K_{m,n-m}^+)$ for some $m$.
The proof is similar to that of Theorem \ref{cce}. In particular, we use $f(x,y)$ and $g(uv)$. 

\begin{proof}[Proof of Theorem \ref{fketto}]
Similarly to Theorem \ref{cce}, we assume indirectly that there is an infinite sequence of counterexamples, i.e. graphs $G_1,G_2,\dots$ such that $|V(G_i)|=n_i$, $\ex(n_i,S_{a,b},F_2)=\cN(S_{a,b},G_i)>\cN(S_{a,b},K_{m,n_i-m})$ for every $m$, and $n_i>n_{i-1}$. 
Let us consider first the case that there exists a constant $\beta<1$ such that for each $G_i$, the maximum degree $\Delta_i<\beta n_i$. 

First observe that $\Delta_i>n_i/2$ by the same reasoning as in Theorem \ref{cce}. Let $\varepsilon>0$ be small enough and $n=n_i$ be large enough. Let $\Delta=\Delta_i$. We will also prove that $m,n-m\le \Delta$.
It is easy to see and follows from a more general theorem of Alon and Shikhelman \cite{ALS2016} that $\ex(n,K_3,F_2)\le c_1n$ for an absolute constant $c_1$. 


Assume first that each vertex has degree at least $3\varepsilon^{1/3} n$.

\begin{clm}
$G$ has at most $(\Delta-1)(n-\Delta+1)-c_2n$ edges for some $c_2$ that depends of $\varepsilon$ but does not depend on $n$.
\end{clm}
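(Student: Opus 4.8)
The plan is to argue that when $G$ is $F_2$-free with every vertex of degree at least $3\varepsilon^{1/3}n$ and maximum degree $\Delta$, the structure is forced to be close to $K_{\Delta,n-\Delta}$, and then to find $\Omega(n)$ missing edges relative to $K_{\Delta-1,n-\Delta+1}$ by exploiting the $F_2$-freeness. First I would take a vertex $v$ of degree $\Delta$ and let $U=N(v)$, $|U|=\Delta$. Since $G$ is $F_2$-free, $U$ cannot contain two independent edges (two independent edges in $U$ together with $v$ form $F_2$), so the edges inside $U$ form a star; in particular there are at most $\Delta-1<n$ edges inside $U$, and all but at most one vertex of $U$ spans no edge inside $U$. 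Symmetrically I would look at $W=V(G)\setminus(U\cup\{v\})$, which has size $n-\Delta-1$: the only thing I control here directly is that $|W|$ is linear in $n$ (since $\Delta<\beta n$), so $W$ contributes at most $\binom{|W|}{2}$ edges, but I will need the degree hypothesis to pin down its internal structure too. Every vertex $w\in W$ has degree at least $3\varepsilon^{1/3}n$, and since $w\notin N(v)$, at most one neighbor of $w$ can lie in the "star-center" part of $U$; the rest of $w$'s neighbors go to the independent part of $U$ or into $W$. Two vertices of $W$ with many common neighbors in the independent part of $U$, together with... — more cleanly: if $w,w'\in W$ are adjacent and each has at least two neighbors in the independent part of $U$, we get $F_2$ unless their neighborhoods there intersect in exactly the shared... this needs care; I would instead directly bound edges inside $W$ by noting that a vertex of $U$ in the independent part, adjacent to two independent edges of $W$, would give $F_2$, hence each such vertex of $U$ has at most one neighbor among... no.

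The cleaner route: since every vertex has degree $\geq 3\varepsilon^{1/3}n$ while $|W|=n-\Delta-1$ could be small, I would first handle the case $|W|$ is small separately, but more robustly I would count edges as $|E(G)| = e(U) + e(U,W\cup\{v\}) + e(W)$ and bound $e(U)\le \Delta-1$, $e(W)\le\binom{|W|}{2}$. The key point is that $G$ has at most $\Delta(n-\Delta)$ edges is NOT automatic here, so I would instead invoke that any two vertices have at most one common neighbor outside a fixed edge... Actually the right tool is already in the paper: Lemma~\ref{vc}. Take $A$ to be a minimum vertex cover. If $A$ were small ($<n/4$) then Lemma~\ref{vc} gives $|E(G)|\le |A|(n-|A|)$, but then some vertex has degree $\le |A| < n/4$, contradicting the hypothesis that all degrees are $\geq 3\varepsilon^{1/3}n$ once $\varepsilon<1/12^3$ is not enough — I need $|A|\geq 3\varepsilon^{1/3}n$, which follows since an uncovered vertex would have all neighbors in... wait, a vertex not in $A$ has all its neighbors in $A$, so $\Delta\le |A|$... no, $A$ is a vertex cover so $V\setminus A$ is independent, fine. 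So $|A|\ge$ (min degree) $\ge 3\varepsilon^{1/3}n$ automatically. Then I split: if $|A|<n/4$, Lemma~\ref{vc} applies; if $|A|\ge n/4$, then $V\setminus A$ is an independent set of size $\le 3n/4$, and I use that $G$ restricted appropriately is bipartite-ish. In the regime $|A|<n/4$: Lemma~\ref{vc} gives $|E(G)|\le |A|(n-|A|)+1$. Since $|A|(n-|A|)$ is maximized at $|A|=n/2$ and $|A|<n/4\le\Delta$... hmm, I actually want to compare to $(\Delta-1)(n-\Delta+1)$. Since $n/2<\Delta<\beta n<n$, we have $(\Delta-1)(n-\Delta+1) < \Delta(n-\Delta)$, and I want $|E(G)|\le (\Delta-1)(n-\Delta+1)-c_2 n$.

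The actual mechanism for producing the linear deficiency: I expect it to come from the $F_2$-free condition forcing a linear number of \emph{non}-edges between $U$ and $W$. Concretely, pick the vertex $v$ of degree $\Delta$; inside $U=N(v)$ the edges form a star with center $u^*$ (or no edges). Now $u^*$ together with $v$ forms an edge $u^*v$, and the $F_2$-free condition around $v$ says $v$'s neighborhood has no two independent edges — already used. Consider instead a vertex $w\in W$: since $wv\notin E$, and $w$ has $\geq 3\varepsilon^{1/3}n$ neighbors, it has $\geq 3\varepsilon^{1/3}n - |W| - 1 \geq 3\varepsilon^{1/3}n - (n-\Delta)$ neighbors in $U$; if $n-\Delta$ is small this is large, if $n-\Delta$ is comparable to $n$ I need the other bound. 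In all cases, two vertices $w_1,w_2\in W$ adjacent to each other with $\ge 2$ common neighbors in $U$ yield $F_2$ (the two common neighbors $x,y\in U$: triangles $w_1w_2x$ and $w_1w_2y$ share edge $w_1w_2$ — that's a book $B_2$, not $F_2$; $F_2$ is two triangles sharing a \emph{vertex}). So I need two triangles sharing a vertex: e.g. $v, u^*$, and a neighbor of $u^*$ in $U$ — but $U$ is $v$'s neighborhood and $u^*$ is adjacent to others in $U$: then $v$–$u^*$–(nbr) and $v$–$u^*$–(another nbr) share vertex $u^*$... no they share edge $vu^*$. The honest statement: $F_2$-free means no vertex is in two edge-disjoint triangles. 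So I would look for a vertex $x$ with two triangles through $x$ sharing only $x$: if $x$ has neighbors $a,b,c,d$ with $ab,cd\in E$ and $\{a,b\}\cap\{c,d\}=\emptyset$.

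Given the delicacy, \textbf{the main obstacle} is controlling the structure of $W$ and the $U$–$W$ bipartite part simultaneously: proving that the $F_2$-free condition, combined with the uniform lower bound $3\varepsilon^{1/3}n$ on degrees, forces all but $O(1)$ vertices of $U$ to be independent, all but $O(1)$ vertices of $W$ to be independent, and the bipartite graph between them to be missing $\Omega(n)$ edges (which must exist because otherwise we'd have a near-complete bipartite graph of the wrong balance, or we could add back an edge). I would carry this out by: (i) reduce to $A=$ min vertex cover with $|A|\geq 3\varepsilon^{1/3}n$; (ii) if $|A|<n/4$ apply Lemma~\ref{vc} and a convexity/arithmetic argument comparing $|A|(n-|A|)$ to $(\Delta-1)(n-\Delta+1)$, using that the minimum degree bound forces $|A|$ away from both endpoints by a linear amount unless $|A|$ is within $O(1)$ of $\Delta$, in which case the "$+1$" and the linear degree surplus give the $-c_2n$; (iii) if $|A|\geq n/4$, use that $V\setminus A$ is independent of size $\leq 3n/4$ together with the degree condition to again bound $e(G)$ and compare. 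The "$-c_2 n$" term should ultimately fall out of the fact that in $K_{\Delta-1, n-\Delta+1}$ every edge has $\min\{\Delta-1, n-\Delta+1\}$... and $G$ has a vertex of degree exactly $\Delta$ whose neighborhood is \emph{almost} independent — the at most one star-center and the $\leq\varepsilon n^2$-type slack, plus the $O(n)$ triangles bound $\ex(n,K_3,F_2)\le c_1 n$, give that only $O(n)$ edges are "wasted," and a direct count then yields $|E(G)|\le \Delta(n-\Delta) + c_1 n \le (\Delta-1)(n-\Delta+1) + (n-\Delta) - (\Delta - 1) + c_1 n$; since $\Delta > n/2$ this last expression is $(\Delta-1)(n-\Delta+1) - (2\Delta - n) + c_1 n + O(1)$, which is \emph{not} obviously $\le (\Delta-1)(n-\Delta+1) - c_2 n$ unless $2\Delta - n \geq (c_1+c_2)n$, i.e. $\Delta \geq (1+c_1+c_2)n/2$ — so the genuinely hard sub-case is $\Delta$ very close to $n/2$, where I would instead invoke Erd\H os–Simonovits stability (Lemma~\ref{deltaersim}) to say $G$ is nearly bipartite, hence has $\leq n^2/4 + O(n) \leq (\Delta-1)(n-\Delta+1) - c_2n$ using $\Delta(n-\Delta) - n^2/4 = -(Δ-n/2)^2$ and a more careful bookkeeping of the at-most-one extra edge in each near-balanced bipartite piece. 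I expect the write-up to split along exactly this line: $\Delta \geq (1+c_3)n/2$ handled by the crude triangle count, and $\Delta < (1+c_3)n/2$ handled by stability plus the single-extra-edge phenomenon of $F_2$-free graphs.
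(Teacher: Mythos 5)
Your proposal is an exploration rather than a proof. You do correctly identify the entry point (assume the claim fails, so $|E(G)|$ is within $\delta n^2$ of $\Delta(n-\Delta)$, and apply Lemma \ref{deltaersim} to obtain a bipartition $A\cup B$ with at most $\varepsilon n^2$ internal edges), and you correctly diagnose that the crude bound $|E(G)|\le \Delta(n-\Delta)+O(n)$ cannot yield $(\Delta-1)(n-\Delta+1)-c_2n$ when $\Delta=n/2+o(n)$. But you never close either of the two gaps on which the actual argument rests. First, the bipartition must be pinned down: taking the max-cut bipartition (so, by the hypothesis, every vertex has at least $3\varepsilon^{1/3}n/2$ neighbors in the opposite part) and letting $u$ be a vertex of degree $\Delta$ with $a$ neighbors in the larger part $A$, the $F_2$-free condition gives at most $n$ edges inside $N(u)$, hence at least $a(\Delta-a)-2n$ missing $A$--$B$ edges; this forces $|A|-a<\varepsilon^{1/3}n$, and then a second neighbor of $u$ in $B$ would have too few neighbors in $A$, so $a\ge\Delta-1$ and $|A|\ge\Delta-1$, whence $|A||B|\le(\Delta-1)(n-\Delta+1)$. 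Second --- and this idea is entirely absent from your proposal --- the linear deficiency comes from Theorem \ref{abhasa}: if there are $q\ge2$ edges inside $A$, they contain a star or matching with $\sqrt q$ edges, and $F_2$-freeness forces all but $O(1)$ incidences from $B$ to that star or matching to be missing (for the star one uses the minimum-degree hypothesis to produce $\varepsilon^{1/3}n$ vertices of $B$ adjacent to its center), giving $\Omega(n\sqrt q)$ missing cross-edges against only $2q\le 2\sqrt{\varepsilon}\,n\sqrt q$ internal edges.

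Your fallback routes do not substitute for this mechanism. Lemma \ref{vc}, applied to a vertex cover of size roughly $n-\Delta$, returns only $|E(G)|\le \Delta(n-\Delta)+1$ with no $-c_2n$ term, and ``nearly bipartite, hence at most $n^2/4+O(n)$ edges'' is weaker than what is required precisely in the hard regime $\Delta$ close to $n/2$. One further point you should be aware of: the claim as literally stated fails for $K^+_{m,n-m}$ (which has $(\Delta-1)(n-\Delta+1)+1$ edges and satisfies the degree hypothesis), so no argument can prove it without excluding the configuration in which each part contains at most one internal edge; the paper's proof indeed only produces a contradiction when $q\ge 2$, and the remaining near-$K^+$ configuration is handled separately after the claim by a direct comparison of copies of $S_{a,b}$. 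Any complete write-up must make that case split explicit, and your proposal does not.
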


\begin{proof}[Proof of Claim]
Assume that the statement does not hold, then by Lemma \ref{deltaersim} there is a bipartite subgraph $G'$ of $G$ with parts $A$ and $B$ such that there are at most $\varepsilon n^2$ edges inside $A$ and $B$ in $G$. We pick among such bipartite graphs the one with the most edges. It implies that each vertex is connected to at least as many vertices in the other part as in its part, in particular to at least $3\varepsilon^{1/3} n/2$ vertices. 

Let $u$ be a vertex of degree $\Delta$, and assume that $u$ is connected to $a$ vertices in part $A$, where $|A|\ge |B|$. Then there are at most $\Delta/2\le n$ edges between its neighbors, thus at least $a(\Delta-a)-2n$ edges are missing between $A$ and $B$. Therefore, the number of edges in $G$ is at most $|A|(n-|A|)+\varepsilon n^2-a(\Delta-a)+2n \le |A|(n-|A|)-|A|(\Delta-|A|)+\varepsilon n^2+2n = |A|(n-\Delta)+\varepsilon n^2+2n$, using that $\varepsilon$ is small enough and $a\le |A|$, thus $a(\Delta-a)\ge |A|(\Delta-|A|)$. Furthermore, if $|A|-a\ge \varepsilon^{1/3} n$, then $a(\Delta-a)\ge |A|(\Delta-|A|)+(2|A|-\Delta)\varepsilon^{1/3}n-\varepsilon^{2/3} n^2\ge |A|(\Delta-|A|)+ \varepsilon^{1/3}(1-\beta-\varepsilon^{1/3})n^2$. If $\varepsilon$ is small enough, this is larger than $|A|(\Delta-|A|)+\varepsilon n^2+3n$, completing the proof. Hence we can assume that $|A|-a<\varepsilon^{1/3} n$. If $B$ contains at least 2 neighbors of $u$, at least one of them has at most one common neighbor with $u$ in part $A$, in particular has less than $\varepsilon^{1/3} n$ neighbors in $A$, a contradiction. Therefore, $u$ has at most one neighbor in $B$, thus $a\ge \Delta-1$, in particular $|A|\ge \Delta-1$.

Assume first that there are $q\ge 2$ edges inside $A$ and $q'\le q$ edges inside $B$ in $G$ and recall that $q+q'\le \varepsilon n^2$ (the case $q'>q$ works the same way, we omit the details). Then by Theorem \ref{abhasa}, there is a star or matching of size at least $\sqrt{q}$ inside $A$. If there is a matching $M$ of size $m\ge\sqrt{q}/100$ inside $A$, then every vertex of $B$ is connected to at most one endpoint of all but one of the edges of $M$, thus to at most $m+1$ vertices of $M$. Therefore, at least $(n-\Delta)(m-1)$ edges are missing between $A$ and $B$. 

Assume that there is a star $S$ of size $m\ge\sqrt{q}$ inside $A$ with center $u$. Recall that at least $\varepsilon^{1/3} n$ vertices in $B$ are connected to $u$.
If $v\in B$ is connected to $u$ and a leaf $w$ of $S$, then no other vertex can be connected to both $u$ and another leaf of $S$. This means that at least $\varepsilon^{1/3} n-1$ vertices of $B$ are connected only to $u$ and $w$ in $S$, thus there are at least $(\varepsilon^{1/3} n-1)(m-1)$ missing edges between $A$ and $B$.

In both cases, at least $(\varepsilon^{1/3} n/2)\sqrt{q}$ edges are missing between $A$ and $B$. On the other hand, there are at most $2q= \sqrt{q}2\sqrt{q}\le \sqrt{q}2\sqrt{\varepsilon} n\le (\varepsilon^{1/3} n/4)\sqrt{q}$ edges inside $A$ and $B$. Therefore, the number of edges in $G_0$ is at most $|A||B|-(\varepsilon^{1/3} n/4)\sqrt{q}\le (\Delta-1)(n-\Delta+1)-(\varepsilon^{1/3} n/4)\sqrt{q}$. Here we used that $A$ has order at least $\Delta-1$. 
\end{proof}

Let us return to the proof of the theorem.
We have  $\sum_{uv\in E(G)}g(uv)\le 3\cN(K_3,G)\le 3c_1 n$. Then for some $m\le \Delta-1$, we have $\cN(S_{a,b},G)=\sum_{uv\in E(G)}f(d(u),d(v))\le |E(G)|f(m,n-m)+\sum_{uv\in E(G)}g(uv)n^{a+b-1}\le |E(G)|f(m,n-m)+3c_1 n^{a+b}\le (\Delta-1)(n-\Delta+1)f(m,n-m)-c_2nf(m,n-m)+3c_1 n^{a+b}\le m(n-m)f(m,n-m)-c_2nf(m,n-m)+3c_1 n^{a+b}\le m(n-m)f(m,n-m)-c'n^{a+b+1}+3c_1 n^{a+b}$ for some constant $c'$ depending on $\alpha,\beta,F,\varepsilon$. 
This completes the proof.

Assume next that there is exactly one edge $uv$ inside $A$ and exactly one edge $xy$ inside $B$.
If no vertex of $A$ is connected to both $x$ and $y$, or no vertex of $B$ is connected to both $u$ and $v$, then $|E(G)|\le \Delta(n-\Delta)-\Omega(n)$, and the above calculation extends to this case to complete the proof. 

Otherwise, it is easy to see that two independent edges on these four vertices, say $ux$ and $vy$ are missing from $G$. We can assume without loss of generality that $uy$ and $vx$ are in $G$. Let $G_0$ denote $K_{|B|,|A|}^+$ with parts $A$ and $B$ and the edge $xy$. Let us compare the number of copies of $S_{a,b}$ in $G$ and in $G_0$. These graphs share the edges with an endvertex not in $\{u,v,x,y\}$, and the endvertices either have the same degree or the degree is larger in $G_0$. Moreover, the intersection of the neighborhoods of the endvertices is as small as possible, thus each such edge is the central edge of the same number of copies of $S_{a,b}$ in both graphs, or more copies of $S_{a,b}$ in $G_0$. The same holds for the edges $ux$ and $vy$. The edge $uv$ in $G$ is the central edge of $\binom{|B|}{a}\binom{|B|-a}{b}$
copies of $S_{a,b}$ in $G$, and the edge $ux$ is the central edge of more than $\binom{|B|-1}{a}\binom{n-|B|-1}{b}+\binom{|B|-1}{b}\binom{n-|B|-1}{a}$  
copies of $S_{a,b}$ in $G_0$ (in the case $a=b$, both quantities are divided by 2). Therefore, $G_0$ contains more copies of $S_{a,b}$ than $G$, a contradiction. 

Let us assume now that there are vertices of degree less than $3\varepsilon^{1/3} n$.
We remove vertices of degree less than $3\varepsilon^{1/3} n$ one by one, as long as there is such a vertex. Let $k$ be the number of vertices removed this way and $G_1$ be the resulting graph.
Each time we remove a vertex, we remove at most $3\varepsilon^{1/3} n^{a+b+1}$ copies of $S_{a,b}$, thus we remove at most $3k\varepsilon^{1/3} n^{a+b+1}$ copies of $S_{a,b}$.
If $k\ge \varepsilon^{1/4(a+b+1)} n$, then we place $K_{\lfloor k/2\rfloor,\lceil k/2\rceil}$ on those vertices, that has at least $c'k^{a+b+2}\ge c'k\varepsilon^{1/4}n^{a+b+1}$ copies of $S_{a,b}$ for some constant $c'$ that depends only on $a$ and $b$. Since $\varepsilon$ is small enough, the number of copies of $S_{a,b}$ increases without creating $F_2$, a contradiction. If $k<\varepsilon^{1/4(a+b+1)} n$, then $n-k$ is large enough, thus we know that $G_1$ contains at most $\cN(S_{a,b},K_{m,n-k-m}^+)$ copies of $S_{a,b}$ for some $m$ with $m,n-k-m\le \Delta$. Then $K_{m,n-m}^+$ contains more copies of $S_{a,b}$ than $G$. Indeed, we pick $k$ vertices in the part of order $n-m$, there are $\cN(S_{a,b},K_{m,n-k-m}^+)$ copies of $S_{a,b}$ that avoid them, and there are at least $k\binom{m}{a+1}\binom{n-m-k}{b}\ge c''k(1-\beta-\varepsilon^{1/4(a+b+1)})^{a+b+1}n^{a+b+1}$ copies of $S_{a,b}$ that contain exactly one of them, for some constant $c''$ that depends only on $a$ and $b$. If $\varepsilon$ is small enough, then $c''k(1-\beta-\varepsilon^{1/4(a+b+1)})^{a+b+1}n^{a+b+1}>3k\varepsilon^{1/3} n^{a+b+1}$, giving a contradiction.

Finally, consider the case that there is no $\beta<1$ such that for each $i$, $\Delta_i<\beta n_i$. Then we can proceed exactly as in the proof of Theorem \ref{cce}, thus we only give a sketch. We can pick a $\gamma$ small enough and an $F_2$-free graph $n$-vertex graph $G$ with $\cN(S_{a,b},G)=\ex(n,S_{a,b},F_2)$, maximum degree at least $(1-\gamma)n$ such that $n$ is large enough. Let $U$ be the neighborhood of a vertex of maximum degree, then there are at most $n$ edges inside $U$ and at most $\gamma n^2$ other edges, thus at most $2\gamma n^{a+b+2}$ copies of $S_{a,b}$, which is less than $\cN(S_{a,b},K_{\lfloor n/2\rfloor,\lceil n/2\rceil})$, a contradiction completing the proof.
\end{proof}


Let us continue with Proposition \ref{klikks}. Let $S_r$ denote the star with $k$ leaves, i.e. $S_r=S_{r,0}$, and let
$n=p(a+b+1)+q$ with $q\le a+b$. Chase \cite{chase}, proving a conjecture of Gan, Loh and Sudakov \cite{gls} showed that for $k\ge 3$ we have $\ex(n,K_k,S_{a+b+1})=p\binom{a+b+1}{k}+\binom{q}{k}$. Recall that Proposition \ref{klikks} states that $\ex(n,K_k,S_{a,b})=p\binom{a+b+1}{k}+\binom{q}{k}$.

\begin{proof}[Proof of Proposition \ref{klikks}]  Let $G=pK_{a+b+1}+K_q$ denote the vertex-disjoint union of $p$ copies of $K_{a+b+1}$ and a copy of $K_q$. Then $G$ is clearly $S_{a,b}$-free (and $S_{a+b+1}$-free) and contains $p\binom{a+b+1}{k}+\binom{q}{k}$ copies of $K_k$. This gives the lower bound.

Let us assume that $a\le b$ without loss of generality. We use induction on $n$ to prove the upper bound.
Let $G$ be an $S_{a,b}$-free $n$-vertex graph with the most copies of $K_k$. If every vertex of $G$ has degree at most $a+b$, then we are done by the result of Chase. If a vertex has degree $d>a+b$, then all its neighbors have degree less than $a$. This way we found $a+b+1$ vertices with degree less than $a$, let $G'$ be the graph obtained by deleting them. We deleted at most $(a+b+1)\binom{a-1}{k-1}$ copies of $K_k$. Let $G''$ be the $n$-vertex $S_{a,b}$-free graph obtained by adding a $K_{a+b+1}$ to $G'$. Then $G''$ contains at least $\cN(K_k,G)-(a+b+1)\binom{a-1}{k-1}+\binom{a+b+1}{k}$ copies of $K_k$. We have $(a+b+1)\binom{a-1}{k-1}=k(a+b+1)(a-1)\dots (a-k+1)/k!<k(a+b+1)\frac{a+b}{2}\dots\frac{a+b-k+2}{2}/k!=k\binom{a+b+1}{k}/2^{k-1}<\binom{a+b+1}{k}$, thus $G''$ contains more copies of $K_k$ than $G$, a contradiction.
\end{proof}

Let us turn to $\ex(n,S_{a,b},S_{c,d})$. Recall that Proposition \ref{neww} deals with the case $c\le a\le b<d$.

\begin{proof}[Proof of Proposition \ref{neww}]
In the calculations in this proof, we assume that $a\neq b$. If $a=b$, then our bounds on the number of copies of $S_{a,b}$ are always divided by 2 because of symmetry, thus the proof also works for that case. 

Let $G$ be an $n$-vertex $S_{c,d}$-free graph.
Let us assume there is a vertex $u$ of degree more than $c+d$ in $G$. Then each neighbor $v$ of $u$ has degree at most $c$, hence cannot be a center of a copy of $S_{a,b}$. As each vertex of $S_{a,b}$ is incident to a center, $u$ cannot be in any copy of $S_{a,b}$, thus we can delete $u$. Therefore, we can assume that each vertex of $G$ has degree at most $c+d$.

Assume now that a vertex $u$ has degree more than $d$. Then for any neighbor $v$ of $u$, there are at most $c+d+1$  vertices (including $u$ and $v$) that are connected to either $u$ or $v$. Therefore, there are at most $s_1=2\binom{c+d-1}{a}\binom{c+d-1-a}{b}$ copies of $S_{a,b}$ with central edge $uv$.
Moreover, if $v$ has degree at most $d$, then there are at most $s_2=\binom{d-1}{a}\binom{c+d-1-a}{b}+\binom{d-1}{b}\binom{c+d-1-b}{a}$ copies of $S_{a,b}$ with central edge $uv$. If $u$ and $v$ both have degree at most $d$, then clearly there are at most $s_3=2\binom{d}{a}\binom{d}{b}$ copies of $S_{a,b}$ with central edge $uv$. Observe that $s_2< s_3$ and $s_2<s_1$.

Let us assume that $G$ contains $m$ vertices of degree at most $d$, and there are $x$ edges between vertices of degree at most $d$ and more than $d$. Then there are at most $(dm-x)/2$ edges between vertices of degree at most $d$ and at most $((c+d)(n-m)-x)/2$ edges between vertices of degree more than $d$. This implies that the total number of copies of $S_{a,b}$ is at most $((c+d)(n-m)-x)s_1/2+s_2x+(dm-x)s_3/2\le (c+d)(n-m)s_1/2+dms_3/2$. This upper bound is linear in $m$, thus the maximum is taken at either $m=0$ or $m=|E(G)|$, and we have $|E(G)|\le (c+d)n/2$. This completes the proof of the upper bound.

If $r$ is nice, then the lower bounds are given by any $d$-regular triangle-free graph on $n$ or $n-1$ vertices. If $r$ is not nice, then the lower bounds are given by $\lfloor n/(c+d+1)\rfloor$ vertex disjoint copies of $K_{c+d+1}$.
\end{proof}


Recall that Theorem \ref{cnc} states that if $a\le b$, $a<c\le d$ and $n$ is sufficiently large, then $\ex(n,S_{a,b},S_{c,d})=\cN(S_{a,b},K_{c,n-c})$.

\begin{proof}[Proof of Theorem \ref{cnc}]
Let $G$ be an $S_{c,d}$-free $n$-vertex graph. Let $A$ denote the set of vertices with degree at most $c$ and $B$ denote the set of vertices of degree at least $c+d+1$. Then every edge from $B$ goes to $A$.
Let $q=\max\{c,|B|\}$ and $x=\binom{c-1}{a}\binom{|A|-a-1}{b}+\binom{c-1}{b}\binom{|A|-b-1}{a}$ for $a\neq b$ and $x=\left(\binom{c-1}{a}\binom{|A|-a-1}{b}+\binom{c-1}{b}\binom{|A|-b-1}{a}\right)/2$ for $a=b$. 

We consider two types of copies of $S_{a,b}$. There are $O(n)$ copies where the central edge is not incident to $B$, since there are $O(n)$ edges in an $S_{c,d}$-free graph and each edge not between $A$ and $B$ has endvertices of degree at most $c+d$.
The number of copies of $S_{a,b}$ with central edge between $A$ and $B$ is at most $|A|qx$. Indeed, there are at most $|A|q$ such edges, we pick $a$ neighbors of the endvertex in $A$, and the other endvertex has at most $|A|-a-1$ other neighbors, we pick $b$ of those vertices, and then repeat this with the role of $a$ and $b$ reversed, if $a\neq b$. Note that $|A|-a-1$ can be replaced here by the degree of the endvertex in $B$ to obtain a better bound.

We are done if $q<c$. Therefore, $B$ has at least $c$ vertices, let $v_1,\dots,v_c$ be the $c$ vertices of largest degree (we choose arbitrarily in the case of ties). If $v_c$ has degree $O(1)$, then we are also done, as only $|A|(c-1)$ edges between $A$ and $B$ would be central edges of $x$ copies of $S_{a,b}$, the other edges between $A$ and $B$ are central edges of $O(1)$ copies.

If a vertex $u$ has degree less than $c$, we can add an edge $uv_i$ without creating $S_{c,d}$. We repeat this until every vertex has degree at least $c$.
Assume next that there is an edge $uv$ between two vertices of degree $c$. Then we delete this edge and we can replace it with $uv_i$ and $vv_j$ for some $i$ and $j$. Clearly the number of copies of $S_{a,b}$ increases. We repeat this until we eliminate every such edge.

Assume now that a vertex $u$ of degree $c$ is connected to a vertex $w$ not among the vertices $v_i$. Then we replace the edge $uw$ with an edge $uv_i$ for some $i$. We claim that the number of copies of $S_{a,b}$ does not decrease. If $w$ has degree at most $c+d$, then we delete $O(1)$ copies of $S_{a,b}$ and add more. Otherwise every neighbor of $w$ has degree $c$. The number of copies of $S_{a,b}$ where $uw$ is the central edge depends only on the degrees of $u$ and $w$, thus there are more copies with $uv_i$ as the central edge. If $uw$ is a leaf edge, the number of copies depends on the degree of $w$ and of the other endvertex $w'$ of the central edge, but that is always $c$, and the same holds for $uv_i$ after the replacement. Here we use that there is no edge between vertices of degree $c$, thus after picking $a-1$ or $b-1$ other neighbors of $w$ or $v_i$, the neighbors of $w'$ are not among the vertices picked earlier. 

Assume that there is a vertex $u$ of degree more than $c$ but not in $\{v_1,\dots, v_c\}$. Then the neighbors of $u$ have degree more than $c$ and at most $c+d$, thus $u$ has degree at most $c+d$, and the neighbors of neighbors of $u$ also have degree at most $c+d$ and more than $c$. This implies that $u$ is in $O(1)$ copies of $S_{a,b}$. Let us delete the edges incident to $u$ and add the edges $uv_i$ for every $i\le c$, then the number of copies of $S_{a,b}$ increases. We repeat this until every vertex but $v_1,\dots,v_c$ has degree $c$. The resulting graph is $K_{c,n-c}$, completing the proof.
\end{proof}

\bigskip

\textbf{Funding}: Research supported by the National Research, Development and Innovation Office - NKFIH under the grants KH 130371, SNN 129364, FK 132060, and KKP-133819.

\end{document}